\documentclass{article} 
\usepackage{amssymb,amsmath,amsthm}
\usepackage{amscd}
\usepackage{hyperref}
\usepackage{epsfig}
\usepackage{amsfonts}
\usepackage{amssymb}
\usepackage{amsmath,enumerate}
\usepackage{commath}
\usepackage{euscript}
\usepackage{enumitem}
\usepackage{amscd}
\usepackage{xcolor}
\usepackage[ruled,vlined]{algorithm2e}
\usepackage[numbers,sort&compress]{natbib}
%\textheight     225 true mm \oddsidemargin   -0.4cm
%\usepackage[all]{xypic}
%\parskip.25cm
%\textwidth=14cm
%\textheight=21cm

\newtheorem{theorem}{Theorem}[section]
\newtheorem{remark}{Remark}[section]

\newtheorem{lemma}{Lemma}[section]

\begin{document}
	\title{Projected solution for Generalized Nash Games with Non-ordered Preferences}
	\author{
		Asrifa Sultana\footnotemark[1] \footnotemark[2] , Shivani Valecha\footnotemark[2] }
	\date{ }
	\maketitle
	\def\thefootnote{\fnsymbol{footnote}}
	
	\footnotetext[1]{ Corresponding author. e-mail- {\tt asrifa@iitbhilai.ac.in}}
	\noindent
	\footnotetext[2]{Department of Mathematics, Indian Institute of Technology Bhilai, Raipur - 492015, India.
	}
	%\author{Asrifa Sultana\footnotemark[1] , Shivani Valecha}
	%\address{Department of Mathematics, Indian Institute of Technology Bhilai, Raipur - 492015, India}
	%\date{}
	
	%\begin{document}
	
	%\maketitle
	%\footnotetext[1]{
		%Department of Mathematics, Indian Institute of Technology Bhilai, Raipur - 492015, India\\
		
		%}
	\begin{abstract}\noindent
	Any individual's preference represents his choice in the set of available options. It is said to be complete if the person can compare any pair of available options. We aim to initiate the notion of projected solutions for the generalized Nash equilibrium problem with non-ordered (not necessarily complete and transitive) preferences and non-self constraint map. We provide the necessary and sufficient conditions under which projected solutions of a quasi-variational inequality and the considered GNEP coincide. Based on this variational reformulation, we derive the occurrence of projected solutions for the considered GNEP. Alternatively, by using a fixed point result, we ensure the existence of projected solutions for the considered GNEP without requiring the compactness of choice sets.
	\end{abstract}
	{\bf Keywords:}
	generalized Nash equilibrium problem; quasi-variational inequality; non-ordered preference; non-self constraint map\\
	{\bf Mathematics Subject Classification:}
	49J40, 49J53, 91B06, 91B42
	%\noindent {\bf Key words:}
	\section{Introduction}
	 In the past few years, generalized Nash equilibrium problems (GNEP) have gained attention because of their applications in electricity market models, economic equilibrium problems, models for environmental sustainability, etc. (see \cite{debreu, ausselproj,cotrinatime, faccsurvey}). The concept of GNEP was initiated by Arrow-Debreu \cite{debreu} to study economic equilibrium problems. Consider a set $\Lambda=\{1,2,\cdots N\}$ consisting of $N$-players. Suppose any player $i\in \Lambda$ regulates a strategy variable $x_i\in \mathbb{R}^{n_i}$ where $\sum_{i\in \Lambda} n_i=n$. Then, we indicate $x=(x_i)_{i\in \Lambda}\in \mathbb{R}^n$ as $x=(x_{-i},x_i)$ where $x_{-i}$ is a vector formed by strategies of all the players except $i$. Suppose the feasible strategy set for player $i$ is given as $K_i(x)\subseteq \mathbb{R}^{n_i}$, which depends on his strategy and the strategies chosen by rivals. For a given strategy $x_{-i}$ of rivals, any player $i$ intends to find a strategy $x_i\in K_i(x)$ such that $x_i$ solves the following problem,
	\begin{equation}\label{GNEP2}
	u_i(x_{-i},x_i) = \max_{y_i\in K_i(x)} u_i(x_{-i}, y_i),
	\end{equation}
	where $u_i:\mathbb{R}^n\rightarrow \mathbb{R}$ denotes objective function of any player $i$ in $\Lambda$.
	Suppose $Sol_i(x_{-i})$ consists of all such vectors $x_i$, which solves the problem (\ref{GNEP2}). Then, any vector $\bar x\in \mathbb{R}^n$ is known as equilibrium for an Arrow-Debreu GNEP $\Upsilon=(X_i,K_i,u_i)_{i\in \Lambda}$ if $\bar x_i\in Sol_i(\bar x_{-i})$ for each $i\in \Lambda$ \cite{debreu,faccsurvey}.
	
	Any individual's preference represents his choice in the set of available options. A preference is said to be complete if he can compare any pair of available options. The above form of Arrow-Debreu GNEP consisting of the real-valued objective functions is well known \cite{faccsurvey}, but it is possible only if the players' preferences are transitive and complete \cite[Proposition 2.5]{kreps}. Shafer-Sonnenschein \cite{shafer} studied the generalized Nash equilibrium problem having non-ordered (that is, not necessarily complete and transitive) preferences after noticing the fact that the completeness property is not always valid for preferences in real-world scenarios. %In particular, the following form of GNEP is known as GNEP with non.
	Suppose non-empty convex set $X_i\subseteq \mathbb{R}^{n_i}$ denotes the choice set for each $i\in \Lambda=\{1,2,\cdots N\}$, where $\sum_{i\in \Lambda}n_i=n$. Let us indicate,
	$$X=\prod_{i\in \Lambda} X_i \subseteq \mathbb{R}^n~\text{and}~ X_{-i}= \prod_{j\in (\Lambda\setminus \{i\})} X_j \subseteq \mathbb{R}^{n-n_i}.$$ 
	Let generalized Nash game (or abstract economy) with non-ordered preference maps be indicated by $\Gamma=(X_i,K_i,P_i)_{i\in \Lambda}$ where $P_i:X\rightrightarrows X_i$ and $K_i:X\rightrightarrows X_i$ are preference map and strategy map, respectively. Then, a vector $\tilde x\in X$ is known as equilibrium \cite{shafer,tian, yannelis} for $\Gamma=(X_i,K_i,P_i)_{i\in \Lambda}$ if,
	\begin{equation}\label{GNEPshafer}
	\tilde x_i\in K_i(\tilde x)~\text{and}~ P_i(\tilde x)\cap K_i(\tilde x)=\emptyset~\text{for every}~ i\in \Lambda.
	\end{equation}
	Suppose $D_i\subseteq X_i$ for each $i\in \Lambda$ and $K_i(x)=D_i$ for all $x\in X$. Then the game $\Gamma$ reduces to Nash equilibrium problem $NEP(D_i,P_i)_{i\in \Lambda}$ with non-ordered preferences (see \cite{scalzo1}). Recently, He-Yannelis \cite{yannelis} and Scalzo \cite[Section 4.3]{scalzo2} ensured the occurrence of equilibrium for generalized Nash games with non-ordered preference maps. Furthermore, Milasi et al. \cite{milasipref,milasipref2021} studied the maximization problem for preferences (represented by binary relations) using variational approach and as an application they derived existence results for economic equilibrium problems under uncertainty.
	
	On the other hand, Aussel et. al. \cite{ausselproj} considered an Arrow-Debreu GNEP $\Upsilon=(X_i,K_i,u_i)_{i\in \Lambda}$ having non-self constraint map, that is, for given strategy $x_{-i}$ of rival players $K_i(x)$ is not necessarily subset of $X_i$ and the product map  $K=\prod_{i\in \Lambda}K_i$ need not be self. The authors in \cite{ausselproj} shown that the classical Nash equilibrium may not exist for such games as $K_i(x)\cap X_i$ is possibly empty for some $i\in \Lambda$. They have provided an illustration of deregulated electricity market model which motivated them to introduce the notion of projected solution for the GNEP with non-self constraint map. The authors demonstrated the existence of projected solution for the GNEP having Euclidean spaces as strategy spaces. Furthermore, Bueno-Cotrina \cite{cotrinaGNEP} established the existence of projected solutions for generalized Nash games having non-self constraint maps defined over Banach spaces. It is worth mentioning that one can ensure the occurrence of projected solution by employing the existing results \cite{ausselproj,cotrinaGNEP} only if the numerical representation for the preferences is available. %To the best of our knowledge there is no detailed study on the projected solutions for GNEP with incomplete and non-transitive preferences.  
	
	In this article, our aim is to study the notion of projected solution for the generalized Nash equilibrium problem $\Gamma=(X_i,K_i,P_i)_{i\in \Lambda}$ in which the preference maps $P_i$ are non-ordered and the product map $K=\prod_{i\in \Lambda}K_i$ is non-self. In this regard, we first provide the necessary and sufficient conditions under which projected solution of a quasi-variational inequality and the considered GNEP coincides. Based on this variational reformulation, we derive the occurrence of projected solution for the considered GNEP. Alternatively, we ensure the occurrence of projected solution for the considered GNEP defined over non-compact choice sets by using a fixed point result.

\section{Preliminaries}
The polar cone $C^{\circ}$ for any set $C \subset \mathbb{R}^m$ is given as, 
$$ C^{\circ}= \{x^*\in \mathbb{R}^m|\,\langle x^*,x\rangle\leq 0~\text{for all}~x\in C\}$$
and $C^\circ=\mathbb{R}^m$ if $C$ is an empty set \cite{ausselnormal,cotrina}.
Furthermore, the normal cone of the set $C$ at some point $x\in \mathbb{R}^m$ is given as $N_C(x)=(C-\{x\})^\circ$, that is, 
\begin{align}
N_C(x)=
\begin{cases}
\{x^*\in \mathbb{R}^m|\,\langle x^*,y-x\rangle \leq 0~\text{for all}~y\in C\},&\text{if}~C\neq\emptyset\\
\mathbb{R}^m,&\text{otherwise}.
\end{cases}
\end{align}

Suppose $C\subseteq \mathbb{R}^m$ is arbitrary. We indicate convex hull and closure of the set $C$ by $co(C)$ and $cl(C)$, respectively. Let $P:C\rightrightarrows \mathbb{R}^p$ be a multi-valued map, that is, $P(x)\subseteq \mathbb{R}^p$ for any $x\in C$. A map $P$ has open upper sections \cite{tian}, if $P(x)$ is open for any $x\in C$. The reader may refer to \cite{homidan,aliprantis}, in order to recall some important concepts of upper semi-continuous (u.s.c.), lower semi-continuous (l.s.c.) and closed multi-valued maps. 

For $X\subseteq \mathbb{R}^n$, define a projection map $Pr_X:\mathbb{R}^n\rightrightarrows X$ as,
\begin{equation}
Pr_X(y)=\{x\,|\,\norm{y-x}=\inf_{w\in X}\norm{y-w}\}.
\end{equation}
Let us recall a result related to these maps, which we will use to prove an upcoming existence result.
\begin{lemma}\label{projection}\cite[Lemma 2.1 and Theorem 2.3]{stampbook}
	Let $X\subset\mathbb{R}^n$ be non-empty closed convex. Then, for any $y\in \mathbb{R}^n$ there exists a unique $x\in X$ such that $Pr_X(y)=\{x\}$. Further, $Pr_X$ is a continuous map and $Pr_X(y)=\{x\}$ if and only if,
	\begin{equation*}
	\langle x-y, \eta-x\rangle \geq 0,~\text{for all}~\eta\in X.
	\end{equation*}
\end{lemma}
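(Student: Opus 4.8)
The plan is to establish the four assertions in the natural order: existence of a minimizer, the variational characterization, uniqueness, and then continuity as a by-product of the characterization. Throughout I would work with the squared distance $w \mapsto \|y-w\|^2$, which has the same minimizers as $\|y-w\|$ and is more convenient to expand.

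For \emph{existence}, fix $y \in \mathbb{R}^n$. The map $w \mapsto \|y-w\|$ is continuous on $X$, but $X$ need not be bounded, so I would first pick any $x_0 \in X$, put $R = \|y-x_0\|$, and observe that $\inf_{w\in X}\|y-w\| = \inf_{w\in X\cap \bar B(y,R)}\|y-w\|$, where $X\cap \bar B(y,R)$ is non-empty, closed and bounded, hence compact. The extreme value theorem then supplies $x \in X$ attaining the infimum. For the \emph{variational characterization} $Pr_X(y)=\{x\}\iff \langle x-y,\eta-x\rangle\geq 0$ for all $\eta\in X$: in the forward direction, given $\eta\in X$ and $t\in(0,1]$, convexity gives $x+t(\eta-x)\in X$, so $\|y-x-t(\eta-x)\|^2\geq\|y-x\|^2$; expanding, dividing by $t$, and letting $t\to 0^+$ yields $\langle x-y,\eta-x\rangle\geq 0$. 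Conversely, if this holds for every $\eta\in X$, then $\|y-\eta\|^2 = \|y-x\|^2 + 2\langle x-y,\eta-x\rangle + \|x-\eta\|^2 \geq \|y-x\|^2$, so $x$ realizes the minimum.

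\emph{Uniqueness} follows directly from the characterization: if $x_1,x_2$ are both minimizers, then $\langle x_1-y,x_2-x_1\rangle\geq 0$ and $\langle x_2-y,x_1-x_2\rangle\geq 0$, and adding these gives $\|x_1-x_2\|^2\leq 0$ (alternatively one invokes the parallelogram identity). The same pairing trick proves \emph{continuity}: for $y_1,y_2$ with $x_j=Pr_X(y_j)$, adding $\langle x_1-y_1,x_2-x_1\rangle\geq 0$ and $\langle x_2-y_2,x_1-x_2\rangle\geq 0$ gives $\|x_1-x_2\|^2\leq \langle y_1-y_2,x_1-x_2\rangle$, and Cauchy--Schwarz yields $\|x_1-x_2\|\leq\|y_1-y_2\|$. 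Hence $Pr_X$ is non-expansive, in particular continuous.

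Since every step is elementary, there is no genuine obstacle here; the only point demanding care is the existence argument, where the possible unboundedness of $X$ forces one to pass to the compact slice $X\cap\bar B(y,R)$ before applying the extreme value theorem. Everything else reduces to expanding squared norms and a single use of Cauchy--Schwarz.
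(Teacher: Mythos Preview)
Your proof is correct and entirely standard. Note, however, that the paper does not actually supply its own proof of this lemma: the statement is simply quoted from the literature (Kinderlehrer--Stampacchia, Lemma~2.1 and Theorem~2.3), so there is nothing to compare against beyond observing that your argument is precisely the classical one found in that reference.
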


We employ following result to ensure the occurrence of projected solution for the GNEP with non-ordered preference maps.
\begin{theorem}\cite[Theorem 2]{himel}\label{kakutani}
	Suppose $Y\subseteq \mathbb{R}^n$ is non-empty convex and $D$ is compact subset of $Y$. Assume that the map $T:Y\rightrightarrows D$ is u.s.c. with non-empty convex closed values. Then a vector $x_\circ\in X$ exists s.t. $x_\circ\in T(x_\circ)$.
\end{theorem}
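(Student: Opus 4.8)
The plan is to deduce this statement from the classical Kakutani fixed point theorem (self-maps of a non-empty compact convex subset of $\mathbb{R}^n$) by first replacing the ambient set $Y$ with a compact convex set that still absorbs the entire range of $T$. The only obstruction to applying Kakutani's theorem to $T$ on $Y$ directly is that $Y$ itself need not be compact, so the first move is to cut $Y$ down.

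First I would set $C=co(D)$. Since $D$ is compact and the ambient space $\mathbb{R}^n$ is finite dimensional, Carath\'eodory's theorem guarantees that $C$ is compact; it is convex by construction, and since $Y$ is convex with $D\subseteq Y$ we also have $C\subseteq Y$. Next I would restrict $T$ to $C$: because $T(y)\subseteq D\subseteq C$ for every $y\in C$, the restriction $T|_C$ is a multi-valued self-map of $C$, it inherits upper semicontinuity from $T$, and its values remain non-empty, convex and closed. Hence $T|_C:C\rightrightarrows C$ satisfies every hypothesis of Kakutani's theorem, so there exists $x_\circ\in C$ with $x_\circ\in T(x_\circ)$; and $C\subseteq Y$ then gives $x_\circ\in Y$, which is the claimed conclusion (the symbol $X$ in the statement should read $Y$).

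The step that deserves the most care is the compactness of $co(D)$, which is precisely where finite dimensionality enters, via Carath\'eodory's theorem; in an infinite-dimensional setting one would instead take $C=cl(co(D))$ and invoke a Kakutani--Fan--Glicksberg type result, paying extra attention to closedness of the graph and to the location of the fixed point relative to $Y$. The remaining verifications — that upper semicontinuity and the non-empty/convex/closed structure of the values survive the restriction to $C$ — are routine, and notably no compactness of $Y$ is ever needed.
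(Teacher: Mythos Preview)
Your argument is correct: passing to $C=co(D)$ gives a compact convex subset of $Y$ (Carath\'eodory in $\mathbb{R}^n$), the restriction $T|_C:C\rightrightarrows C$ is a Kakutani map, and the resulting fixed point lies in $C\subseteq Y$. There is nothing to compare against, however, because the paper does not prove this theorem at all --- it is simply quoted from \cite{himel} as a tool, with no argument given. Your reduction to the classical Kakutani theorem is the standard way to recover this special (finite-dimensional) case of Himmelberg's result, and your remark about taking $cl(co(D))$ and invoking Kakutani--Fan--Glicksberg in the infinite-dimensional setting is exactly how Himmelberg's own proof proceeds.
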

\subsection{Tools Required for Variational Reformulation}

%Let us recall some important definitions related to continuity of a multi-valued map $P:C\rightrightarrows \mathbb{R}^p$,
%\begin{itemize}
%\item[-] The map $P$ is said to be upper semi-continuous at $x_\circ \in C$ if for any open set $V$ containing $P(x_\circ)$ there exists open set $U$ containing $x_\circ$ such that $P(U)\subseteq V$.
%\item[-] The map $P$ is said to be lower semi-continuous at $x_\circ \in C$ if for any open set $V$ satisfying $P(x_\circ)\cap V\neq \emptyset$ there exists open set $U$ containing $x_\circ$ such that $P(x')\cap V\neq \emptyset$ for any $x'\in U$.
%\item[-] 
%\end{itemize}

%Assume that $\tilde P:X\times Y\rightrightarrows \mathbb{R}^p$ is defined as $\tilde P(x)=co(P(x))$. Then, 
Suppose $S^>_{f(x,y)}=\{z\in \mathbb{R}^p|\,f(x,z)> f(x,y)\}$ denotes a strict sub-level set for a real-valued function $f:\mathbb{R}^m\times \mathbb{R}^p\rightarrow\mathbb{R}$. The normal operator $\mathcal{N}^>_f:\mathbb{R}^m\times \mathbb{R}^p\rightrightarrows\mathbb{R}^p$ \cite{homidan,ausselnormal, cotrina} corresponding to $f$ is defined as, $\mathcal N^>_{f}(x,y)= (S^>_{f(x,y)}-\{y\})^\circ,$ that is,
\begin{align}\label{normalope}
\mathcal N^>_{f}(x,y)=
\begin{cases}
\{y^*\in \mathbb{R}^p|\,\langle y^*,z-y\rangle \leq 0~\forall~z\in S^>_{f(x,y)}\},&\text{if}~S^>_{f(x,y)}\neq\emptyset\\
\mathbb{R}^p,&\text{otherwise}.
\end{cases}
\end{align}
It is well established that the normal operator $\mathcal{N}^>_f$ defined in (\ref{normalope}) becomes non-empty valued and closed under suitable assumptions on the function $f$ and sub-level set $S^>_{f(x,y)}$ (see for example \cite{homidan,cotrina}).

Suppose $P_i:\mathbb{R}^{n-n_i}\times \mathbb{R}^{n_i}\rightrightarrows\mathbb{R}^{n_i}$ denotes the preference map of player $i$. We define a map $\mathcal N_{P_i}:\mathbb{R}^{n-n_i}\times \mathbb{R}^{n_i}\rightrightarrows\mathbb{R}^{n_i}$ by adapting the mentioned normal operator $\mathcal{N}^>_f$ to the context of multi-valued map $P_i$ as 
$\mathcal N_{P_i}(x_{-i},x_i)= (P_i(x_{-i},x_i)-\{x_i\})^\circ,$ that is, 
\begin{align}\label{normal}
\mathcal N_{P_i}(x_{-i},x_i)=
\begin{cases}
\{x_i^*|\,\langle x_i^*,z_i-x_i\rangle \leq 0~\forall\,z_i\in P(x_{-i},x_i)\},&\text{if}~{P_i}(x_{-i},x_i)\neq\emptyset\\
\mathbb{R}^{n_i},&\text{otherwise}.		
\end{cases}	
\end{align}

In order to study the game $\Gamma$ through variational inequality theory, %we consider a map $\tilde{P}_i:\mathbb{R}^{n-n_i}\times \mathbb{R}^{n_i}\rightrightarrows\mathbb{R}^{n_i}$ defined as $\tilde P_i(x_{-i},x_i)= co(P_i(x_{-i},x_i))$ as the preference map of every player $P_i$ is not necessarily convex valued (see for instance \cite[Section 4]{scalzo_disc}). Further,
we define a map $T_i:\mathbb{R}^{n-n_i}\times \mathbb{R}^{n_i}\rightrightarrows \mathbb{R}^{n_i}$ for any $i\in \Lambda$ as follows,
\begin{equation} \label{Ti}
T_i(x_{-i},x_i)=co(\mathcal{N}_{\tilde P_i}(x_{-i},x_i)\cap S_i[0,1]),
\end{equation} 
where $\mathcal{N}_{\tilde P_i}:\mathbb{R}^{n-n_i}\times \mathbb{R}^{n_i}\rightrightarrows \mathbb{R}^{n_i}$ is defined corresponding to $\tilde P_i(x_{-i},x_i)= co(P_i(x_{-i},x_i))$ as (\ref{normal}) and $S_i[0,1]=\{x\in \mathbb{R}^{n_i}|\,\norm{x}=1\}$. Suppose $T:\mathbb{R}^n\rightrightarrows \mathbb{R}^n$ is defined as, 
\begin{equation}
T(x)=\prod_{i\in \Lambda}T_i(x).\label{T}
\end{equation} 
Then, clearly the map $T$ is convex and compact valued map. 

We employ the following results for obtaining projected solutions of the considered GNEP through variational reformulation. 
\begin{lemma} \label{Tusc}
	Suppose $T:\mathbb{R}^{n}\rightrightarrows \mathbb{R}^{n}$ is a multi-valued map defined as (\ref{T}) and $Z\subseteq \mathbb{R}^n$ is closed. If for each $i\in \Lambda$,
	\begin{itemize}
		\item[(a)] $P_i$ is l.s.c. map on $Z$ then the map $T$ is u.s.c. over $Z$;
		\item[(b)] $P_i$ satisfies $x_i\notin co (P_i(x))$ for each $x\in Z$ then $T$ is non-empty valued on the set $Z$.
	\end{itemize}
\end{lemma}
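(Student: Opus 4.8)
The plan is to prove the two statements separately, each by reducing it to standard facts about the operations $co$, $(\cdot)^{\circ}$, intersection with $S_i[0,1]$, and Cartesian product that build up $T$ in (\ref{Ti})--(\ref{T}).

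For part (b), fix $i\in\Lambda$ and $x=(x_{-i},x_i)\in Z$. Since $T_i(x)$ is the convex hull of $\mathcal{N}_{\tilde P_i}(x)\cap S_i[0,1]$ and $T(x)=\prod_{i\in\Lambda}T_i(x)$, it suffices to show $\mathcal{N}_{\tilde P_i}(x)\cap S_i[0,1]\neq\emptyset$. If $P_i(x)=\emptyset$, then $\tilde P_i(x)=co(P_i(x))=\emptyset$, so by (\ref{normal}) $\mathcal{N}_{\tilde P_i}(x)=\mathbb{R}^{n_i}$, which meets the unit sphere. If $P_i(x)\neq\emptyset$, then $\tilde P_i(x)$ is a non-empty convex set and, by hypothesis, $x_i\notin\tilde P_i(x)$; a standard separation theorem applied to the point $x_i$ and this convex set produces $v\in\mathbb{R}^{n_i}\setminus\{0\}$ with $\langle v,z_i-x_i\rangle\le 0$ for every $z_i\in\tilde P_i(x)$, i.e. $v\in\mathcal{N}_{\tilde P_i}(x)$. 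Since $\mathcal{N}_{\tilde P_i}(x)$ is a cone, $v/\norm{v}\in\mathcal{N}_{\tilde P_i}(x)\cap S_i[0,1]$. Hence $T_i(x)\neq\emptyset$ for every $i$, so $T$ is non-empty valued on $Z$.

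For part (a), I would argue through a chain of semicontinuity-preservation statements. First, if $P_i$ is l.s.c. on $Z$ then $\tilde P_i=co(P_i)$ is l.s.c. on $Z$: for $x\in Z$, a point $z=\sum_j\lambda_j z_j$ of $co(P_i(x))$ (finitely many $j$, $z_j\in P_i(x)$), and an open neighbourhood $V$ of $z$, l.s.c. of $P_i$ yields, for $x'$ near $x$, points $z_j'\in P_i(x')$ close to each $z_j$, and continuity of $(z_1',\dots,\lambda)\mapsto\sum_j\lambda_j z_j'$ then forces $\sum_j\lambda_j z_j'\in co(P_i(x'))\cap V$. Since $x\mapsto x_i$ is continuous, $x\mapsto\tilde P_i(x)-\{x_i\}$ is again l.s.c. on $Z$. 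Second, for any l.s.c. map $Q$ on the closed set $Z$ the polar map $x\mapsto(Q(x))^{\circ}$ has closed graph: if $x_k\to x$ in $Z$, $y_k^*\in(Q(x_k))^{\circ}$, $y_k^*\to y^*$, then given $z\in Q(x)$ there exist $z_k\in Q(x_k)$ with $z_k\to z$, and letting $k\to\infty$ in $\langle y_k^*,z_k\rangle\le 0$ gives $\langle y^*,z\rangle\le 0$ (the case $Q(x)=\emptyset$ is immediate). Applied with $Q=\tilde P_i-\{x_i\}$, this shows $\mathcal{N}_{\tilde P_i}$, and hence $x\mapsto\mathcal{N}_{\tilde P_i}(x)\cap S_i[0,1]$, has closed graph; as the latter takes values in the compact set $S_i[0,1]$, it is u.s.c. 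Passing to convex hulls preserves upper semicontinuity and compactness of values in $\mathbb{R}^{n_i}$ (the values remain inside $co(S_i[0,1])$, the closed unit ball), so each $T_i$ in (\ref{Ti}) is u.s.c. on $Z$, and therefore $T=\prod_{i\in\Lambda}T_i$ in (\ref{T}) is u.s.c. on $Z$, being a finite product of u.s.c. maps.

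The main obstacle I anticipate is organising this chain cleanly while allowing empty values throughout --- in particular, checking that convexification preserves lower semicontinuity and that $(\cdot)^{\circ}$ sends an l.s.c. map to a closed-graph map (so that intersecting with the sphere returns a u.s.c. map). The remaining ingredients --- the separation argument in (b), the implication ``closed graph plus range in a fixed compact set $\Rightarrow$ u.s.c.'', and upper semicontinuity of a finite product --- are routine.
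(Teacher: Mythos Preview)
Your proposal is correct and follows essentially the same route as the paper's proof: for (b) both use separation plus normalization, and for (a) both establish that $\tilde P_i=co(P_i)$ is l.s.c.\ (you argue it directly, the paper cites \cite[Theorem~5.9]{rockafellar}), deduce that $\mathcal{N}_{\tilde P_i}$ has closed graph via the same sequential argument, and then invoke ``closed graph with range in a fixed compact set $\Rightarrow$ u.s.c.''\ \cite[Theorem~17.11]{aliprantis}. The only cosmetic difference is that you apply this last implication to $\mathcal{N}_{\tilde P_i}\cap S_i[0,1]$ and then pass to convex hulls, whereas the paper asserts closedness of $T_i$ itself and applies the implication directly to $T_i$.
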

\begin{proof}
	To prove (a), suppose $i\in \Lambda$ is arbitrary. Then, $\tilde P_i$ is l.s.c. map according to \cite[Theorem 5.9]{rockafellar}. We claim that $\mathcal{N}_{\tilde P_i}:Z\rightrightarrows \mathbb R^{n_i}$ is a closed map. Let us consider sequences $\{(x^n_{-i},x^n_i)\}_{n\in \mathbb{N}}\in Z$ and $(x_i^n)^*\in  \mathcal N_{\tilde P_i}(x^n_{-i},x^n_i)$ with $(x^n_{-i},x^n_i)\rightarrow(x_{-i},x_i)$ and $(x_i^n)^*\rightarrow x_i^*$. It is enough to prove $x_i^*\in \mathcal N_{\tilde P_i}(x_{-i},x_i)$. In fact, the claim follows trivially if $\tilde P(x_{-i},x_i)=\emptyset$. Suppose $z_i\in \tilde P_i(x_{-i},x_i)$ is arbitrary. Since $\tilde P_i$ is l.s.c. on $Z$, we obtain some sequence $z^n_i\in \tilde P_{i}(x^n_{-i},x^n_i)$ such that $z^n_i\rightarrow z_i$. The fact that $(x_i^n)^*\in  \mathcal N_{\tilde P_i}(x^n_{-i},x^n_i)$ implies,
	$$ \langle (x_i^n)^*, z_i^n-x^n_i\rangle \leq 0,\quad \forall n\in \mathbb{N}.$$ 
	Finally, we obtain $\langle x_i^*,z_i-x_i\rangle \leq 0$ by taking $n\rightarrow \infty$. Since $z_i\in \tilde P_{i}(x_{-i},x_i)$ is arbitrary, we observe $x_i^*\in \mathcal N_{\tilde P_i}(x_{-i},x_i)$. Clearly, the map $T_i:Z \rightrightarrows \mathbb{R}^{n_i}$ defined as (\ref{Ti}) becomes closed. Finally, the fact that $T_i(x)\subseteq \bar B_i(0,1)$ for any $x$ in $Z$ implies $T_i$ is u.s.c. map \cite[Theorem 17.11]{aliprantis}. This leads us to the conclusion $T=\prod_{i\in \Lambda} T_i$ meets upper semi-continuity over $Z$.
	
	To prove (b), suppose $(x_{-i},x_i)\in Z$ is arbitrary. We claim that $\mathcal N_{\tilde P_i}(x_{-i},x_i)\setminus\{0\}\neq \emptyset$. In fact, $\tilde P_i(x_{-i},x_i)=\emptyset$ implies $\mathcal N_{\tilde P_i}(x_{-i},x_i)=\mathbb {R}^{n_i}$. If $\tilde P_i(x_{-i},x_i)\neq \emptyset$, then using the fact $x_i\notin co (P_i(x))$ a vector $0\neq x_i^*\in \mathbb R^{n_i}$ is obtained (as per separation theorem \cite[Theorem 2.5]{aubin}) such that, $$\langle x_i^*,z_i\rangle \leq \langle x_i^*,x_i\rangle~\text{for all}~z_i\in \tilde P_i(x_{-i},x_i).$$ 
	This, finally leads us to the conclusion $x_i^*\in \mathcal N_{\tilde P_i}(x_{-i},x_i)\setminus \{0\}$. Clearly, $\frac{x_i^*}{\norm{x_i^*}}\in \mathcal{N}_{\tilde P_i}(x_{-i},x_i) \cap S_i[0,1]$. Hence, $T(x)=\prod_{i\in \Lambda} T_i(x)\neq \emptyset$ for any $x\in Z$.
\end{proof}

\begin{lemma}\label{relation}
	Suppose $i\in \Lambda$ is fixed and $P_i$ admits open upper sections over the set $Z\subset \mathbb{R}^n$. For $(x_{-i},x_i)\in Z$, if $x_i^*\in \mathcal N_{\tilde P_i}(x_{-i},x_i)$ and there exists $z_i\in \tilde P_i(x_{-i},x_i)$ such that $\langle x_i^*,z_i-x_i\rangle\geq 0$ then $x_i^*=0$. 
\end{lemma}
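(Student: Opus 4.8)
The plan is to reduce everything to the elementary observation that the convex hull of an open set is open. First I would note that, since $P_i$ admits open upper sections over $Z$, the set $P_i(x_{-i},x_i)$ is open in $\mathbb{R}^{n_i}$, and hence $\tilde P_i(x_{-i},x_i)=co(P_i(x_{-i},x_i))$ is open as well: any point $z$ of this convex hull can be written as $z=\sum_k\lambda_k p_k$ with $p_k\in P_i(x_{-i},x_i)$, $\lambda_k\ge 0$, $\sum_k\lambda_k=1$ and, say, $\lambda_1>0$, so that $\lambda_1 P_i(x_{-i},x_i)+\sum_{k\ge 2}\lambda_k p_k$ is an open neighbourhood of $z$ contained in $\tilde P_i(x_{-i},x_i)$. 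Because the hypothesis supplies a point $z_i\in\tilde P_i(x_{-i},x_i)$, this set is moreover non-empty.

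Next I would unpack the membership $x_i^*\in\mathcal N_{\tilde P_i}(x_{-i},x_i)$. As $\tilde P_i(x_{-i},x_i)\neq\emptyset$, definition (\ref{normal}) (applied to $\tilde P_i$) gives $\langle x_i^*,z-x_i\rangle\le 0$ for every $z\in\tilde P_i(x_{-i},x_i)$; specialising to $z=z_i$ and combining with the standing assumption $\langle x_i^*,z_i-x_i\rangle\ge 0$ forces
\[
\langle x_i^*,z_i-x_i\rangle=0.
\]

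Finally I would argue by contradiction. Suppose $x_i^*\neq 0$. Using the openness of $\tilde P_i(x_{-i},x_i)$ at $z_i$, pick $\varepsilon>0$ with $B(z_i,\varepsilon)\subseteq\tilde P_i(x_{-i},x_i)$, and choose $t>0$ small enough that $t\norm{x_i^*}<\varepsilon$, so that $z_i+t x_i^*\in\tilde P_i(x_{-i},x_i)$. The normal-cone inequality then yields $\langle x_i^*,(z_i+t x_i^*)-x_i\rangle\le 0$, whereas the left-hand side equals $\langle x_i^*,z_i-x_i\rangle+t\norm{x_i^*}^2=t\norm{x_i^*}^2>0$, a contradiction. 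Hence $x_i^*=0$.

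This statement is short, so I do not expect a genuine obstacle; the only points needing care are the (standard) fact that the convex hull of an open set is open and the verification that $x_i^*$ itself can serve as a feasible perturbation direction at $z_i$, which is precisely what openness of $\tilde P_i(x_{-i},x_i)$ provides.
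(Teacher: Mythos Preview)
Your proof is correct and follows the same approach as the paper: both first observe that $\tilde P_i(x_{-i},x_i)=co(P_i(x_{-i},x_i))$ is open and then use this openness to show that any nonzero element of the normal cone must yield a strictly negative pairing with $z_i-x_i$. The paper merely outsources this last step to \cite[Lemma~2.1]{cotrina}, whereas you carry out the perturbation $z_i\mapsto z_i+t x_i^*$ explicitly.
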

\begin{proof}
	One can equivalently prove that $\langle x_i^*,z-x_i\rangle<0$ whenever $0\neq x_i^*\in \mathcal N_{\tilde P}(x_{-i},x_i)$ and $z_i\in \tilde P_i(x_{-i},x_i)$. Clearly, $\tilde P_i(x_{-i},x_i)$ is an open set as $P_i(x_{-i},x_i)$ is open. 
	%If $P(x,y)=\emptyset$ then there is nothing to prove. 
	For any $z_i\in \tilde P_i(x_{-i},x_i)$, we have $z_i-x_i\in int (\tilde P(x_{-i},x_i)-\{x_i\})$. Hence, the result follows from \cite[Lemma 2.1]{cotrina}.% we observe $\langle x_i^*,z-x_i\rangle<0$ for each $x_i^*\in (\tilde P(x_{-i},x_i)-\{x_i\})^\circ=\mathcal N_{\tilde P}(x_{-i},x_i)$ with $x_i^*\neq 0$. 
\end{proof}

\section{Variational Reformulation of Projected Solution for GNEP with Non-ordered Preferences}
Let $\Lambda=\{1,2,\cdots N\}$ be a set of involved agents. Suppose a non-empty convex set $X_i\subseteq \mathbb{R}^{n_i}$ denotes the choice set for each agent $i\in \Lambda$, where $\sum_{i\in \Lambda}n_i=n$. Assume that,
	$$X=\prod_{i\in \Lambda} X_i \subseteq \mathbb{R}^n~\text{and}~ X_{-i}= \prod_{j\in (\Lambda\setminus \{i\})} X_j \subseteq \mathbb{R}^{n-n_i}.$$ 
	Consider the multi-valued maps $P_i:\mathbb{R}^{n}\rightrightarrows \mathbb{R}^{n_i}$ and $K_i:X\rightrightarrows \mathbb{R}^{n_i}$ as preference map and constraint map, respectively, for any $i\in \Lambda$. Then, a vector $\tilde x\in X$ is said to be projected solution for the generalized Nash game $\Gamma=(X_i,K_i,P_i)_{i\in \Lambda}$ if there exists $\tilde y=(\tilde y_i)_{i\in \Lambda}\in \mathbb{R}^n$ such that,
	\begin{itemize}
		\item [(a)]$\norm{\tilde y-\tilde x}=\inf_{w\in X}\norm{\tilde y-w}$
		\item [(b)] $\tilde y_i$ solves $NEP(K_i(\tilde x),P_i)_{i\in \Lambda}$, that is, $\tilde y_i\in K_i(\tilde x)$ and $P_i(\tilde y)\cap K_i(\tilde x)=\emptyset$ for each $i\in \Lambda$.
	\end{itemize}

 If the preference map $P_i:X_{-i}\times X_i\rightrightarrows X_i$ is representable by a real-valued function $u_i:\mathbb{R}^n\rightarrow \mathbb{R}$ then $P_i(x_{-i},x_i)=\{y_i\in \mathbb{R}^{n_i}\,|\, u_i(x_{-i},y_i)>u_i(x_{-i},x_i)\}$ (see \cite{shafer}). Then, one can observe that the considered game $\Gamma=(X_i,K_i,P_i)_{i\in \Lambda}$ reduces to the Arrow-Debreu GNEP $\Upsilon= (X_i,K_i,u_i)_{i\in \Lambda}$ with non-self constraint map considered by Aussel et al. in \cite{ausselproj}. Furthermore, the concept of projected solution defined by us coincides with \cite{ausselproj} in this case.
\subsection{Necessary and Sufficient conditions for Variational Reformulation}\label{neccondi}
Suppose $T:K\rightrightarrows \mathbb{R}^n$ is a multi-valued map where $K\subseteq \mathbb{R}^n$. The Stampacchia VI problem $VI(T,K)$ \cite{ausselproj} corresponds to determine $\bar x\in K$ s.t.,
$$ \exists\,\bar x^*\in T(\bar x)~\text{satisfying}~\langle \bar x^*,z-\bar x\rangle\geq 0~\text{for each}~z\in K.$$

%By considering the constraint set $K$ is not fixed and it relies on the current point, Chan-Pang \cite{chan} extended the concept of VI problem to quasi-variational inequalities (QVI). 
Suppose $T:D\rightrightarrows \mathbb{R}^n$ and $K:D\rightrightarrows D$ are multi-valued maps where $D\subseteq \mathbb{R}^n$. Then, the QVI problem $QVI(T,K)$ \cite{ausselproj} is to find $\bar x\in K(\bar x)$ s.t.,
$$\exists\,\bar x^*\in T(\bar x)~\text{satisfying}~\langle \bar x^*,z-\bar x\rangle\geq 0~\text{for each}~z\in K(\bar x).$$

Aussel et. al. \cite{ausselproj} initiated the concept of projected solution for QVI with non-self constraint maps. Suppose $T:\mathbb{R}^n\rightrightarrows \mathbb{R}^n$ and $K:D\rightrightarrows \mathbb{R}^n$ are multi-valued maps where $D\subseteq \mathbb{R}^n$. Then, a vector $\tilde x\in D$ is known as projected solution \cite{ausselproj} for $QVI(T,K)$ if there exists $\tilde y\in \mathbb{R}^n$ such that,
\begin{itemize}
	\item[(a)] $\norm{\tilde y-\tilde x}=\inf_{w\in X}\norm{\tilde y-w}$
	\item [(b)] $\tilde y$ solves $VI(T,K(\tilde x))$, that is, $\tilde y\in K(\tilde x)$ satisfies,
	$$ \exists\,\tilde y^*\in T(\tilde y)~\text{satisfying}~\langle \tilde y^*,z-\tilde y\rangle\geq 0~\text{for each}~z\in K(\tilde x).$$
\end{itemize} 
The authors in \cite[Lemma 4.1]{ausselproj} observed that the projected solutions of Arrow-Debreu GNEP and a quasi-variational inequality coincides. By using this relation, they derived the existence of projected solution for such GNEP in \cite[Theorem 4.2]{ausselproj}.

In the following result, we show that any projected solution of a quasi-variational inequality is projected solution for the considered generalized Nash game $\Gamma$  under suitable conditions.
\begin{theorem}\label{equivalence}
Suppose the map $T$ is defined as (\ref{T}) and the map $K:X\rightrightarrows X$ is defined as $K(x)=\prod_{i\in \Lambda} K_i(x)$. Then, any projected solution of $QVI(T,K)$ is also a projected solution for $\Gamma =(X_i,K_i,P_i)_{i\in \Lambda}$ if for each $i\in \Lambda$:
\begin{itemize}
	\item[(a)] $X_i\subset \mathbb{R}^{n_i}$ is non-empty convex closed;
	\item [(b)] $K_i:X\rightrightarrows \mathbb{R}^{n_i}$ admits non-empty values;
	\item[(c)] $P_i:\mathbb{R}^n\rightrightarrows \mathbb{R}^{n_i}$ admits open upper sections for any $y\in \prod_{i\in \Lambda}co (\overline{K_i(X)})$.
\end{itemize}
\end{theorem}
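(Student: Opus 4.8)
\noindent The plan is a proof by contradiction: I start from a projected solution of $QVI(T,K)$, test the quasi‑variational inequality against ``mixed'' feasible vectors that agree with the witness on all but one block in order to isolate a single coordinate, and then apply Lemma~\ref{relation} to the unit normal vectors that make up an element of $T_i$. Concretely, fix a projected solution $\tilde x\in X$ of $QVI(T,K)$ with a witness $\tilde y=(\tilde y_i)_{i\in\Lambda}\in\mathbb R^n$, so that $\norm{\tilde y-\tilde x}=\inf_{w\in X}\norm{\tilde y-w}$ and $\tilde y\in K(\tilde x)=\prod_{i\in\Lambda}K_i(\tilde x)$ together with some $\tilde y^*=(\tilde y^*_i)_{i\in\Lambda}\in T(\tilde y)=\prod_{i\in\Lambda}T_i(\tilde y)$ satisfies $\langle\tilde y^*,z-\tilde y\rangle\ge 0$ for every $z\in K(\tilde x)$. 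Hypothesis~(a) makes $X$ a nonempty closed convex set, so the displayed projection condition is meaningful and is exactly condition~(a) in the definition of a projected solution of $\Gamma$; hypothesis~(b) keeps each $K_i(\tilde x)$, hence $K(\tilde x)$, nonempty so that the QVI is well posed; and $\tilde y\in\prod_{i\in\Lambda}K_i(\tilde x)$ immediately gives $\tilde y_i\in K_i(\tilde x)$ for every $i$. Thus only condition~(b) of that definition remains, namely $P_i(\tilde y)\cap K_i(\tilde x)=\emptyset$ for each $i\in\Lambda$.

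\noindent Suppose, towards a contradiction, that for some index $i$ there is a point $z_i\in P_i(\tilde y)\cap K_i(\tilde x)$. Since $\tilde y_j\in K_j(\tilde x)$ for all $j\neq i$, the vector $z:=(\tilde y_{-i},z_i)$ lies in $K(\tilde x)$, and because $z$ and $\tilde y$ coincide on every block except the $i$-th, testing the QVI at $z$ collapses to $\langle\tilde y^*_i,z_i-\tilde y_i\rangle\ge 0$. By the definition~(\ref{Ti}) of $T_i$ we may write $\tilde y^*_i=\sum_k\lambda_k v_k$ as a finite convex combination with each $v_k\in\mathcal N_{\tilde P_i}(\tilde y)\cap S_i[0,1]$ and $\lambda_k>0$; in particular every $v_k$ is nonzero and lies in $\mathcal N_{\tilde P_i}(\tilde y)$. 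Moreover $\tilde x\in X$ forces $\tilde y_j\in K_j(\tilde x)\subseteq\overline{K_j(X)}\subseteq co(\overline{K_j(X)})$ for all $j$, so $\tilde y\in\prod_{i\in\Lambda}co(\overline{K_i(X)})$, which is precisely the set on which hypothesis~(c) grants open upper sections for $P_i$; and $z_i\in P_i(\tilde y)\subseteq co(P_i(\tilde y))=\tilde P_i(\tilde y)$. Applying Lemma~\ref{relation} to each nonzero $v_k\in\mathcal N_{\tilde P_i}(\tilde y)$ and the point $z_i\in\tilde P_i(\tilde y)$ rules out $\langle v_k,z_i-\tilde y_i\rangle\ge 0$, hence $\langle v_k,z_i-\tilde y_i\rangle<0$ for every $k$; multiplying by $\lambda_k$ and summing gives $\langle\tilde y^*_i,z_i-\tilde y_i\rangle<0$, contradicting the inequality obtained from the QVI. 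Therefore $P_i(\tilde y)\cap K_i(\tilde x)=\emptyset$ for all $i\in\Lambda$, so $\tilde y$ solves $NEP(K_i(\tilde x),P_i)_{i\in\Lambda}$ and $\tilde x$ is a projected solution of $\Gamma$.

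\noindent I expect the main obstacle to be the very last link: turning ``$\tilde y^*_i$ is \emph{some} element of $T_i(\tilde y)$'' into a contradiction. Three points need care here. First, one must verify that $\tilde y$ genuinely lies in $\prod_{i\in\Lambda}co(\overline{K_i(X)})$, the domain on which hypothesis~(c) provides open upper sections, so that Lemma~\ref{relation} is applicable at the point $\tilde y$; this is where the somewhat unusual form of hypothesis~(c) is used. Second, the construction of $T_i$ goes through the convexified preference $\tilde P_i=co(P_i)$, and one must check this does not break the membership $z_i\in\tilde P_i(\tilde y)$ — it does not, since $P_i(\tilde y)\subseteq\tilde P_i(\tilde y)$. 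Third, one must exploit that Lemma~\ref{relation} actually delivers the \emph{strict} inequality $\langle v_k,z_i-\tilde y_i\rangle<0$ for nonzero normal vectors $v_k$; this strictness is exactly what survives the convex combination and produces the contradiction, in particular even in the borderline case where $\tilde y^*_i$ happens to equal $0$. By comparison, the coordinate‑wise reduction of the QVI and the verbatim transfer of condition~(a) of the projected‑solution definition are routine.
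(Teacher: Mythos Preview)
Your proof is correct and follows the same overall strategy as the paper: pass the projection condition through verbatim, argue by contradiction, isolate a single coordinate by testing the QVI at $z=(\tilde y_{-i},z_i)$, and exploit Lemma~\ref{relation} together with the convex-hull structure of $T_i$.

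The only difference lies in how the final contradiction is organised. The paper first applies Lemma~\ref{relation} to the aggregate vector $\tilde y_i^*\in\mathcal N_{\tilde P_i}(\tilde y)$ to conclude $\tilde y_i^*=0$, and then runs a separate pointed-cone argument (writing $0=\sum_k\lambda_k y_{ki}^*$, using that $\mathcal N_{\tilde P_i}(\tilde y)$ is a convex cone, and invoking \cite[Lemma~2.1]{cotrina} once more to force some $y_{k_\circ i}^*=0$, contradicting $y_{k_\circ i}^*\in S_i[0,1]$). You instead apply Lemma~\ref{relation} directly to each extreme vector $v_k\in S_i[0,1]$ in the convex combination: since each $v_k\neq 0$, the contrapositive of the lemma gives $\langle v_k,z_i-\tilde y_i\rangle<0$, and summing with positive weights yields $\langle\tilde y_i^*,z_i-\tilde y_i\rangle<0$, contradicting the QVI immediately. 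Your route is slightly shorter and avoids the second, explicit appeal to the pointed-cone lemma; the paper's route has the minor advantage of recording the intermediate fact $\tilde y_i^*=0$ along the way. Both are equally rigorous, and your verification that $\tilde y\in\prod_{i\in\Lambda}co(\overline{K_i(X)})$ before invoking hypothesis~(c) is a point the paper leaves implicit.
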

\begin{proof}
Suppose $\tilde x$ is a projected solution for $QVI(T,K)$ then there exists $\tilde y\in K(\tilde x)$ such that,
\begin{equation}\label{GNEPQVI}
	\exists\,\tilde y^*\in T(\tilde y), \langle \tilde y^*, z-\tilde y\rangle \geq 0~\text{for all}~ z\in K(\tilde x)
\end{equation}
and $Pr_X(\tilde y)=\tilde x$.

To prove that $\tilde x$ is a projected solution for $\Gamma$, it is enough to show that $\tilde y$ is an equilibrium for $NEP(K_i(\tilde x),P_i)_{i\in \Lambda}$. Let us assume that $\tilde y$ is not an equilibrium for the Nash game $NEP(K_i(\tilde x),P_i)_{i\in \Lambda}$. Then $P_i(\tilde y) \cap K_i(\tilde x)\neq \emptyset$ for some $i\in \Lambda$. Clearly, it yields $co(P_i(\tilde y))\cap  K_i(\tilde x)\neq \emptyset$ for some $i\in \Lambda$. Suppose $z_i\in co(P_i(\tilde y))\cap  K_i(\tilde x)$. Then, $z=(\tilde y_{-i}, z_i)\in K(\tilde x)$ and from (\ref{GNEPQVI}) we obtain,
\begin{equation}\label{rel}
	\langle \tilde y_i^*, z_i-\tilde y_i\rangle \geq 0.
\end{equation}
By virtue of Lemma \ref{relation}, one can observe that $\tilde y_i^*=0$ as $\tilde y_i^*\in co(\mathcal N_{\tilde P_i}(\tilde y)\cap S_i[0,1])\subseteq \mathcal N_{\tilde P_i}(\tilde y)$ fulfills (\ref{rel}). 

Again, using the fact that $\tilde y_i^*\in co(\mathcal N_{\tilde P_i}(\tilde y)\cap S_i[0,1])$, one can obtain $y_{1i}^*,\cdots, y_{ri}^*\in \mathcal N_{\tilde P_i}(\tilde y)\cap S_i[0,1]$ with $\lambda_1,\cdots, \lambda_r\in [0,1]$ satisfying $\sum_{k=1}^{r}\lambda_k=1$ and,
$$0= \tilde y_i^*=\sum_{k=1}^{r}\lambda_k y_{ki}^*.$$
Suppose we have $k_\circ\in \{1,\cdots, r\}$ with $\lambda_{k_\circ} >0$. Then,
$$ -y_{k_\circ i}^*=\sum_{k\neq k_\circ,k=1}^{r} \frac{\lambda_k}{\lambda_{k_\circ}} y^*_{ki}.$$
We get $-y_{k_\circ i}^*\in \mathcal{N}_{\tilde P_i}(\tilde y)$ as it is a convex cone. Hence, $y_{k_\circ i}^*\in \mathcal{N}_{\tilde P_i}(\tilde y)\cap -\mathcal{N}_{\tilde P_i}(\tilde y)$. However, $\tilde P_i(\tilde y)$ is an open set due to our assumptions on $P_i$ and it also contains $z_i$. Therefore, $\mathcal{N}_{\tilde P_i}(\tilde y)\cap -\mathcal{N}_{\tilde P_i}(\tilde y)=\{0\}=y_{k_\circ i}^*$ by \cite[Lemma 2.1]{cotrina}. But, this contradicts the fact that $y_{k_\circ i}^*\in S_i[0,1]$. Consequently, our hypothesis is false and $\tilde y$ becomes an equilibrium for Nash game $NEP(K_i(\tilde x),P_i)_{i\in \Lambda}$.
\end{proof}

Following result provides the suitable conditions under which projected solution for the considered generalized Nash games becomes projected solution for an associated quasi-variational inequality.
\begin{theorem}
Suppose the map $T$ is defined as (\ref{T}) and the map $K:X\rightrightarrows X$ is defined as $K(x)=\prod_{i\in \Lambda} K_i(x)$. Then, any solution of the generalized Nash game $\Gamma=(X_i,K_i,P_i)_{i\in \Lambda}$ solves quasi-variational inequality $QVI(T,K)$ if for each $i\in \Lambda$:
\begin{itemize}
	\item[(a)] $P_i:\mathbb{R}^n\rightrightarrows \mathbb{R}^{n_i}$ admits convex values for any $y\in \prod_{i\in \Lambda}co (\overline{K_i(X)})$;
	\item[(b)]  $y_i\in cl(P_i(y))$ for all $y\in \prod_{i\in \Lambda}co (\overline{K_i(X)})$ with $P_i(y)\neq \emptyset$;
	\item[(c)] $K_i:X\rightrightarrows \mathbb{R}^{n_i}$ is a non-empty, closed and convex valued map.
\end{itemize}
\end{theorem}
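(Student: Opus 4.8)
The plan is to prove the converse direction to Theorem~\ref{equivalence}: with the \emph{same} witness point, a projected solution of $\Gamma$ is a projected solution of $QVI(T,K)$. So let $\tilde x\in X$ be a projected solution of $\Gamma=(X_i,K_i,P_i)_{i\in\Lambda}$, and let $\tilde y=(\tilde y_i)_{i\in\Lambda}\in\mathbb R^n$ be a corresponding witness, i.e. $\norm{\tilde y-\tilde x}=\inf_{w\in X}\norm{\tilde y-w}$, while $\tilde y_i\in K_i(\tilde x)$ and $P_i(\tilde y)\cap K_i(\tilde x)=\emptyset$ for every $i\in\Lambda$. Since $K(x)=\prod_{i\in\Lambda}K_i(x)$, we already have $\tilde y\in K(\tilde x)$, and the projection condition is precisely the one demanded of a projected solution of $QVI(T,K)$. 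Hence the whole argument reduces to producing a vector $\tilde y^*\in T(\tilde y)$ with $\langle\tilde y^*,z-\tilde y\rangle\ge 0$ for all $z\in K(\tilde x)$. As $T(\tilde y)=\prod_{i\in\Lambda}T_i(\tilde y)$ and $K(\tilde x)$ is a product, it suffices to build, for each fixed $i\in\Lambda$, a vector $\tilde y_i^*\in T_i(\tilde y)$ with $\langle\tilde y_i^*,z_i-\tilde y_i\rangle\ge 0$ for all $z_i\in K_i(\tilde x)$; then $\tilde y^*=(\tilde y_i^*)_{i\in\Lambda}$ works, because $\langle\tilde y^*,z-\tilde y\rangle=\sum_{i\in\Lambda}\langle\tilde y_i^*,z_i-\tilde y_i\rangle\ge 0$.

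I would then split on whether $P_i(\tilde y)$ is empty. If $P_i(\tilde y)=\emptyset$, then $\tilde P_i(\tilde y)=co(P_i(\tilde y))=\emptyset$, so $\mathcal N_{\tilde P_i}(\tilde y)=\mathbb R^{n_i}$ and $T_i(\tilde y)=co(S_i[0,1])$, which is the closed unit ball and in particular contains $0$; the choice $\tilde y_i^*=0$ trivially satisfies the required inequality. In the case $P_i(\tilde y)\ne\emptyset$, note first that $\tilde y_i\in K_i(\tilde x)\subseteq\overline{K_i(X)}\subseteq co(\overline{K_i(X)})$, so hypotheses (a) and (b) are available at $\tilde y$: $P_i(\tilde y)$ is convex — hence $\tilde P_i(\tilde y)=P_i(\tilde y)$ and $\mathcal N_{\tilde P_i}(\tilde y)=\mathcal N_{P_i}(\tilde y)$ — and $\tilde y_i\in cl(P_i(\tilde y))$.

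The crux is a separation step. The sets $P_i(\tilde y)$ and $K_i(\tilde x)$ are non-empty and convex (by (a) and (c)) and disjoint (by the $NEP(K_i(\tilde x),P_i)_{i\in\Lambda}$ equilibrium condition), hence there exist $x_i^*\neq 0$ and $\beta\in\mathbb R$ with $\langle x_i^*,a\rangle\le\beta\le\langle x_i^*,b\rangle$ for all $a\in P_i(\tilde y)$ and $b\in K_i(\tilde x)$. The point $\tilde y_i$ sits exactly on this separating hyperplane: $\tilde y_i\in cl(P_i(\tilde y))$ gives $\langle x_i^*,\tilde y_i\rangle\le\beta$, and $\tilde y_i\in K_i(\tilde x)$ gives $\langle x_i^*,\tilde y_i\rangle\ge\beta$, so $\langle x_i^*,\tilde y_i\rangle=\beta$. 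Therefore $\langle x_i^*,z_i-\tilde y_i\rangle\le 0$ for all $z_i\in P_i(\tilde y)$, i.e. $x_i^*\in\mathcal N_{P_i}(\tilde y)=\mathcal N_{\tilde P_i}(\tilde y)$, while simultaneously $\langle x_i^*,z_i-\tilde y_i\rangle\ge 0$ for all $z_i\in K_i(\tilde x)$. Setting $\tilde y_i^*:=x_i^*/\norm{x_i^*}$, we get $\tilde y_i^*\in\mathcal N_{\tilde P_i}(\tilde y)\cap S_i[0,1]\subseteq T_i(\tilde y)$ (a point lies in its own convex hull, and $\mathcal N_{\tilde P_i}(\tilde y)$ is a cone), and scaling by $1/\norm{x_i^*}>0$ preserves $\langle\tilde y_i^*,z_i-\tilde y_i\rangle\ge 0$ on $K_i(\tilde x)$. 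Collecting the $\tilde y_i^*$ over $i\in\Lambda$ yields $\tilde y^*\in T(\tilde y)$ with the desired property, so $\tilde x$ is a projected solution of $QVI(T,K)$; together with Theorem~\ref{equivalence} this gives the announced coincidence of the two notions.

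The only delicate point — and the one I expect to be the main obstacle — is meeting the two competing requirements on $\tilde y_i^*$ at once: it must lie in the normal cone $\mathcal N_{\tilde P_i}(\tilde y)$ (roughly, point ``away from'' $P_i(\tilde y)$), it must give a non-negative inner product against $K_i(\tilde x)-\tilde y_i$ (point the other way relative to $K_i(\tilde x)$), and it must be non-zero so it can be normalised onto $S_i[0,1]$. Hypothesis (b), placing $\tilde y_i$ in $cl(P_i(\tilde y))$, is exactly what pins $\tilde y_i$ onto the separating hyperplane and thereby reconciles the first two requirements; non-triviality of $x_i^*$ comes for free, since two disjoint non-empty convex subsets of $\mathbb R^{n_i}$ always admit a (proper) separating hyperplane with no interiority assumption — which also disposes of the degenerate subcase where $P_i(\tilde y)$ has empty interior. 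The empty-preference subcase is absorbed by the observation $0\in co(S_i[0,1])$.
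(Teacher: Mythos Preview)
Your proof is correct and follows essentially the same route as the paper: split on whether $P_i(\tilde y)$ is empty (taking $\tilde y_i^*=0\in co(S_i[0,1])$ in that case), and otherwise separate the disjoint convex sets $P_i(\tilde y)$ and $K_i(\tilde x)$, then use hypothesis~(b) to pin $\tilde y_i$ onto the separating hyperplane so that the normalised separator lies in $T_i(\tilde y)$ and satisfies the required inequality on $K_i(\tilde x)$. The only cosmetic difference is that the paper phrases the use of~(b) via a sequence in $P_i(\tilde y)$ converging to $\tilde y_i$ and passes to the limit, whereas you invoke continuity of the linear form to extend the inequality to the closure directly; your explicit check that $\tilde y\in\prod_{i\in\Lambda}co(\overline{K_i(X)})$ (so that (a) and (b) are indeed available at $\tilde y$) is a welcome clarification.
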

\begin{proof}
%Suppose $i\in \mathcal{N}$ is chosen arbitrarily. 
Suppose $\tilde x$ is projected solution for the given game $\Gamma$. Then, there exists $\tilde y\in K(\tilde x)$ such that $Pr(\tilde y)=\tilde{x}$ and $P_i(\tilde y)\cap K_i(\tilde x)=\emptyset$ for any $i\in \Lambda$. It is sufficient to show that $\tilde y$ solves $VI(T,K(\tilde x))$. In the view of assumption (a), we observe that $\tilde P_i(\tilde y)=P_i(\tilde y)$. Suppose $\Lambda_\circ \subseteq \Lambda$ consists of all $i$ for which $P_i(\tilde y)=\emptyset$. Then, we obtain $\mathcal N_{P_i}(\tilde y)=\mathbb{R}^{n_i}$ for any $i\in \Lambda_\circ$. Clearly, for each $i\in \Lambda_\circ$ we have $0=\tilde y_i^*\in T_i(\tilde y)$ such that $\langle \tilde y_i^*, z_i -\tilde y_i\rangle \geq 0$ for any $z_i\in K_i(\tilde x)$.% as,
%$$ \exists~0\in T(\tilde x)~\text{such that}~\langle 0,y-\tilde x\rangle \geq 0~\forall y\in K(\tilde x).$$ 

Consider, the case when $i\in \Lambda\setminus \Lambda_\circ$ and $P_i(\tilde y)\neq \emptyset$. Since $P_i(\tilde y)\cap K_i(\tilde x)=\emptyset$, we obtain some $0\neq y_i^*\in \mathbb{R}^{n_i}$ by using separation theorem \cite[Theorem 2.5]{aubin} such that $y_i^*$ satisfies,
\begin{equation}\label{separ}
	\sup_{z_i\in P_i(\tilde y)}\langle y_i^*, z_i\rangle \leq \inf_{w_i\in K_i(\tilde x)}\langle y_i^*,w_i\rangle.
\end{equation} 
Since $\tilde y_i\in K_i(\tilde x)$, we obtain,
\begin{equation*}
	\langle y_i^*, z_i-\tilde y_i\rangle \leq 0, \enspace\text{for all}~z_i\in P_i(\tilde x).
\end{equation*} 
Then, $0\neq y_i^*\in \mathcal{N}_{P_i}(\tilde y)$, and consequently $\tilde y_i^*=\frac{y_i^*}{\norm{y_i^*}}\in T_i(\tilde y).$

We claim that $\langle \tilde y_i^*, w_i-\tilde y_i\rangle \geq 0$ for all $w_i\in K_i(\tilde x)$. In fact, we already have $ \tilde y_i\in cl(P_i(\tilde y))$. Thus, one can obtain a sequence $\{\tilde y_i^n\}_{n\in \mathbb{N}}$ in $P_i(\tilde y)$ converging to $\tilde y_i$ by using convexity and non-emptiness of the set $P_i(\tilde y)$. According to (\ref{separ}),
$$\langle \tilde y_i^*, y_i^n\rangle \leq \inf_{w_i\in K_i(\tilde x)}\langle \tilde y_i^*,w_i\rangle,~\text{for all}~n\in \mathbb{N}.$$
By taking $n\rightarrow\infty$ one can obtain,
\begin{equation}\label{eqqvi}
	\langle \tilde y_i^*, w_i -\tilde y_i\rangle \geq 0~\text{for any}~w_i\in K_i(\tilde x).
\end{equation}  
Finally, we assume that,$$ \tilde y_i^*=
\begin{cases}
	0,~& \text{if}~i\in \Lambda_\circ\\
	\frac{y_i^*}{\norm{y_i^*}},~& \text{if}~i\in \Lambda\setminus \Lambda_\circ
\end{cases}$$ 
where $y_i^*$ is obtained in (\ref{separ}). In the view of (\ref{eqqvi}), one can observe that $\tilde y^*=(\tilde y_i^*)_{i\in \Lambda}\in T(\tilde y)$ satisfies
$ \langle \tilde y^*, w-\tilde y \rangle \geq 0~\text{for all}~w\in K(\tilde x).$
\end{proof}

\section{Existence of Projected Solution for GNEP}
In this section, we show the occurrence of projected solution for the game $\Gamma=(X_i,K_i,P_i)_{i\in \Lambda}$. In this regard, we use the variational reformulation of projected solutions derived in Subsection \ref{neccondi}. Further, we use Himmelberg fixed point result Theorem \ref{kakutani} to ensure the occurrence of projected solution for the game $\Gamma$ defined over unbounded choice sets $X_i$.

Let us denote $D=X\times \prod_{i\in \Lambda}co (\overline{K_i(X)})$. Define a map $\Phi:D\rightrightarrows \mathbb{R}^n\times \mathbb{R}^n$,
\begin{equation}\label{definePhi}
\Phi(x,y)= \{(f(x,y),y^*)|\,y^*\in T(y)\}
\end{equation}
where the function $f:D\rightarrow \mathbb{R}^n$ is defined as $f(x,y)=x-y$ and the map $T$ is defined as (\ref{T}). Furthermore, consider a map $\mathcal{K}:D\rightrightarrows D$ defined as
\begin{equation}\label{defineK}
\mathcal{K}(x,y)=X\times K(x).
\end{equation} 
Then, following result shows that any solution of $QVI(\Phi,\mathcal K)$ is a projected solution for the considered game.
\begin{lemma}\label{GNEPRem}
	Suppose the maps $\Phi$ and $\mathcal{K}$ are defined as (\ref{definePhi}) and (\ref{defineK}) respectively. Then, any solution of $QVI(\Phi, \mathcal{K})$ is a projected solution for the game $\Gamma=(X_i,K_i,P_i)_{i\in \Lambda}$ if for each $i\in \Lambda$:
	\begin{itemize}
		\item[(a)] $X_i\subset \mathbb{R}^{n_i}$ is non-empty convex closed;
		\item [(b)] $K_i:X\rightrightarrows \mathbb{R}^{n_i}$ admits non-empty values;
		\item[(c)] $P_i:\mathbb{R}^n\rightrightarrows \mathbb{R}^{n_i}$ admits open upper sections for any $y\in \prod_{i\in \Lambda}co (\overline{K_i(X)})$.
	\end{itemize}
\end{lemma}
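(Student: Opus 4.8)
The plan is to reduce $QVI(\Phi,\mathcal{K})$ to the situation covered by Theorem \ref{equivalence} and then to invoke that theorem directly. First I would unpack what it means for a point $(\bar x,\bar y)\in D$ to solve $QVI(\Phi,\mathcal{K})$: by definition $(\bar x,\bar y)\in\mathcal{K}(\bar x,\bar y)=X\times K(\bar x)$, so $\bar x\in X$ and $\bar y\in K(\bar x)$, and there exists $(\bar p,\bar q)\in\Phi(\bar x,\bar y)$ — that is, $\bar p=f(\bar x,\bar y)=\bar x-\bar y$ and $\bar q\in T(\bar y)$ — such that
\begin{equation*}
\langle(\bar x-\bar y,\bar q),(u,z)-(\bar x,\bar y)\rangle\geq 0\quad\text{for all }(u,z)\in X\times K(\bar x).
\end{equation*}
Since the inner product splits over the two coordinates and $(u,z)$ ranges over the product $X\times K(\bar x)$, this single inequality is equivalent to the pair of inequalities $\langle\bar x-\bar y,u-\bar x\rangle\geq 0$ for all $u\in X$, and $\langle\bar q,z-\bar y\rangle\geq 0$ for all $z\in K(\bar x)$.

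The first of these is exactly the variational characterization of the projection from Lemma \ref{projection}: since $X$ is non-empty closed convex by (a), $\langle\bar x-\bar y,u-\bar x\rangle\geq 0$ for all $u\in X$ says precisely $Pr_X(\bar y)=\{\bar x\}$, i.e. $\norm{\bar y-\bar x}=\inf_{w\in X}\norm{\bar y-w}$. The second inequality, together with $\bar y\in K(\bar x)$ and $\bar q\in T(\bar y)$, says exactly that $\bar y$ solves $VI(T,K(\bar x))$. Hence $\bar x$ is a projected solution of $QVI(T,K)$ in the sense recalled before Theorem \ref{equivalence}. Now I would apply Theorem \ref{equivalence}, whose hypotheses (a), (b), (c) are verbatim the hypotheses (a), (b), (c) of the present lemma, to conclude that $\bar x$ is a projected solution for $\Gamma=(X_i,K_i,P_i)_{i\in\Lambda}$.

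The only genuinely substantive point is the separation of the product variational inequality over $QVI(\Phi,\mathcal{K})$ into the two independent inequalities — and that is elementary, relying on the fact that $\mathcal{K}(x,y)$ and $\Phi(x,y)$ both have product structure over the two blocks of coordinates, so one may vary $u$ and $z$ independently. Everything else is a direct citation: Lemma \ref{projection} for the projection coordinate and Theorem \ref{equivalence} for the QVI-to-GNEP passage. I expect no real obstacle; the proof is essentially a bookkeeping argument that recognizes $QVI(\Phi,\mathcal{K})$ as a repackaging of "projection constraint plus $VI(T,K(\bar x))$," which is exactly the definition of a projected solution of $QVI(T,K)$.
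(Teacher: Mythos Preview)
Your proposal is correct and follows essentially the same route as the paper: unpack $QVI(\Phi,\mathcal{K})$, split the product inequality via Lemma~\ref{projection} to recognise a projected solution of $QVI(T,K)$, and then invoke Theorem~\ref{equivalence}. The paper's argument is terser but identical in substance; your explicit justification of the splitting (by setting $z=\bar y\in K(\bar x)$ and $u=\bar x\in X$ in turn) is a welcome clarification.
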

\begin{proof}
	In the view of Lemma \ref{projection}, we observe that $\tilde x$ is a projected solution for $QVI(T,K)$ if and only if there exists $\tilde y\in K(\tilde x)$ such that $(\tilde x,\tilde y)\in X\times K(\tilde x)$ solves the following quasi-variational inequality: 
	\begin{equation}\label{newQVI}
	\exists\, \tilde y^*\in T(\tilde y)~\text{s.t.}~\langle f(\tilde x,\tilde y), \eta -\tilde x\rangle+\langle\tilde y^*,z-\tilde y\rangle\geq 0,~\forall\,(\eta,z)\in X\times K(\tilde x)
	\end{equation}
	where the function $f:\mathbb{R}^n\rightarrow\mathbb{R}^n$ is considered as $f(x,y)=x-y$. In fact, inequality (\ref{newQVI}) represents quasi-variational inequality $QVI(\Phi,\mathcal K)$. Eventually, any solution of $QVI(\Phi,\mathcal{K})$ (\ref{newQVI}) yields a projected solution for $QVI(T,K)$, which becomes a projected solution for the given game $\Gamma$ by virtue of Theorem \ref{equivalence}.
\end{proof}

Following lemma is used to show the occurrence of projected solution for the considered GNEP $\Gamma=(X_i,K_i,P_i)_{i\in \Lambda}$.
\begin{lemma}\label{lemmaupp}
	Assume that $X$ and $Y$ are non-empty subsets of  $\mathbb{R}^n$. Suppose a map $T:Y\rightrightarrows \mathbb{R}^n$ is u.s.c. with non-empty convex compact values and $f:X\times Y\rightarrow \mathbb{R}$ is a continuous function. Then, $\Phi:X\times Y\rightrightarrows X\times Y$ defined as $\Phi(x,y)=\{(f(x,y),y^*)|y^*\in T(y)\}$ is an u.s.c. map with non-empty convex compact values.
\end{lemma}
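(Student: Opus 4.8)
The plan is to read the three pointwise properties of the values of $\Phi$ directly off the hypotheses on $T$, and then to obtain upper semi-continuity of $\Phi$ by exhibiting it as a Cartesian product of two upper semi-continuous maps with compact values. Throughout I read $f$ as an $\mathbb{R}^n$-valued map, consistently with the way $\Phi$ is used in (\ref{definePhi}); only continuity of $f$ is ever used, so the target space is immaterial. For the pointwise part, fix $(x,y)\in X\times Y$ and observe that $\Phi(x,y)=\{f(x,y)\}\times T(y)$. Since $T(y)\neq\emptyset$, this set is non-empty; a singleton is convex and compact, $T(y)$ is convex and compact by hypothesis, and a finite Cartesian product of convex (resp. compact) sets is convex (resp. compact), so $\Phi(x,y)$ is convex and compact. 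This settles the values of $\Phi$.

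For upper semi-continuity I would introduce the auxiliary maps $F:X\times Y\rightrightarrows \mathbb{R}^n$, $F(x,y)=\{f(x,y)\}$, and $G:X\times Y\rightrightarrows \mathbb{R}^n$, $G(x,y)=T(\pi(x,y))$, where $\pi(x,y)=y$ is the (continuous) coordinate projection. The map $F$ is upper semi-continuous with compact values because a single-valued continuous map is upper semi-continuous when regarded as a multimap (cf. \cite{aliprantis}); the map $G$ is upper semi-continuous with compact values as the composition of the continuous map $\pi$ with the upper semi-continuous map $T$. Since $\Phi(x,y)=F(x,y)\times G(x,y)$, and the Cartesian product of finitely many upper semi-continuous maps with compact values is again upper semi-continuous with compact values --- the very fact already used in Lemma \ref{Tusc} to pass from the maps $T_i$ to $T=\prod_{i\in\Lambda}T_i$ --- the map $\Phi$ is upper semi-continuous with compact values, which is the assertion.

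Should a self-contained argument be preferred in place of this product fact, the direct verification at a point $(x_0,y_0)$ runs as follows. Let $V$ be open with $\Phi(x_0,y_0)=\{f(x_0,y_0)\}\times T(y_0)\subseteq V$. Using compactness of $T(y_0)$ and a finite-subcover (tube-lemma) argument --- especially simple here because the first factor is a single point --- one produces open sets $V_1\ni f(x_0,y_0)$ and $V_2\supseteq T(y_0)$ with $V_1\times V_2\subseteq V$. Continuity of $f$ gives a neighbourhood $U_1$ of $(x_0,y_0)$ with $f(U_1)\subseteq V_1$, and upper semi-continuity of $T$ gives a neighbourhood $W$ of $y_0$ with $T(y)\subseteq V_2$ for all $y\in W$; then on the neighbourhood $U=U_1\cap\bigl(\mathbb{R}^n\times W\bigr)$ of $(x_0,y_0)$ in $X\times Y$ one has $\Phi(x,y)=\{f(x,y)\}\times T(y)\subseteq V_1\times V_2\subseteq V$. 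Equivalently, since all values are compact one may argue sequentially: if $(x_k,y_k)\to(x,y)$ and $w_k=(f(x_k,y_k),y_k^*)$ with $y_k^*\in T(y_k)$, then $f(x_k,y_k)\to f(x,y)$ by continuity, while upper semi-continuity of $T$ with compact values forces $(y_k^*)$ eventually into a compact set and its limit points into $T(y)$, so a subsequence of $(w_k)$ converges to a point of $\Phi(x,y)$.

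I expect the only genuine work to be this product/tube-lemma bookkeeping; the non-emptiness, convexity, and compactness of the values are immediate, and the reduction to $F$ and $G$ (or, in the direct route, the extraction of $V_1,V_2$) is purely formal. No compactness of $X$ or $Y$ is needed, which is why this lemma is suited to the unbounded setting of the intended application.
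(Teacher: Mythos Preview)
Your proposal is correct, and your direct-verification alternative is exactly the paper's argument: the paper also picks open sets $O_1\ni f(x,y)$ and $O_2\supseteq T(y)$ with $O_1\times O_2\subseteq O$ (the tube-lemma step you make explicit), uses continuity of $f$ to get $U_1=f^{-1}(O_1)$, upper semi-continuity of $T$ to get $U_2$ with $T(U_2)\subseteq O_2$, and takes $U=U_1\cap(U'\times U_2)$. Your primary route via the product $F\times G$ of upper semi-continuous compact-valued maps is a harmless repackaging of the same idea---it trades the in-situ tube-lemma step for an appeal to the standard product fact---and your remark that $f$ should be read as $\mathbb{R}^n$-valued (as in (\ref{definePhi})) correctly identifies a typo in the statement.
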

\begin{proof}
	The map $\Phi$ becomes non-empty convex compact valued as per our hypothesis on $T$. We show that $\Phi$ is u.s.c. Suppose $(x,y)\in X\times Y$ is arbitrary element and $O$ is open set containing $\Phi(x,y)$. Suppose $O_1$ and $O_2$ are open sets containing $f(x,y)$ and $T(y)$ such that $O_1\times O_2\subseteq O$.  Since $f$ is a continuous function $U_1=f^{-1}(O_1)$ is open set containing $(x,y)$. Further, we obtain open set $U_2$ containing $y$ such that $T(U_2)\subset O_2$ by using upper semi-continuity of $T$. Suppose $U'$ is arbitrary open neighbourhood of $x$ and $U=U_1\cap (U'\times U_2)$. Clearly, $U$ is an open set consisting of $(x,y)$ such that $\Phi(U)\subseteq O$.
\end{proof}
In following theorem, we ensure the occurrence of projected solution for GNEP $\Gamma=(X_i,K_i,P_i)_{i\in \Lambda}$ by using its variational reformulation derived in Theorem \ref{equivalence}.
\begin{theorem}\label{compactresult}
	Assume that for any $i\in \Lambda$:
	\begin{itemize}
		\item[(a)] $X_i\subset \mathbb{R}^{n_i}$ is non-empty compact convex;
		\item[(b)] $K_i:X\rightrightarrows \mathbb{R}^{n_i}$ is closed lower semi-continuous map with non-empty convex values for any $x\in X$;
		\item[(c)]$P_i:\mathbb{R}^{n}\rightrightarrows \mathbb{R}^{n_i}$ is lower semi-continuous map;
		\item[(d)] $P_i(y)$ is open set and $y_i\notin co(P_i(y))$ for any $y\in \prod_{i\in \Lambda}co (\overline{K_i(X)})$.
	\end{itemize}
	Then, $\Gamma=(X_i,K_i,P_i)_{i\in \Lambda}$ admits a projected solution if $K(X)$ is relatively compact.
\end{theorem}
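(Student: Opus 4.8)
The plan is to pass through the variational reformulation: show that the quasi-variational inequality $QVI(\Phi,\mathcal K)$ of \eqref{newQVI} admits a solution, and then invoke Lemma \ref{GNEPRem} to turn that solution into a projected solution of $\Gamma$. To see that Lemma \ref{GNEPRem} applies, note that its hypothesis (a) is exactly (a) here, (b) is (b) here, and since (d) forces $P_i(y)$ to be open for every $y\in\prod_{i\in\Lambda}co(\overline{K_i(X)})$, the map $P_i$ has open upper sections on that set, which is hypothesis (c) of Lemma \ref{GNEPRem}. So everything reduces to solving $QVI(\Phi,\mathcal K)$, and this I would do with the Himmelberg fixed point theorem (Theorem \ref{kakutani}).

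First I would fix the working domain. Because $K(X)$ is relatively compact and coordinate projections are continuous, each $\overline{K_i(X)}$ is compact in $\mathbb R^{n_i}$, and by Carath\'eodory's theorem $co(\overline{K_i(X)})$ is then compact and convex; hence $Z:=\prod_{i\in\Lambda}co(\overline{K_i(X)})$ and, using (a), the set $D=X\times Z$ are nonempty compact convex. Next I would record the regularity of the two maps. On the closed set $Z$, Lemma \ref{Tusc}(a) together with (c) gives that $T$ of \eqref{T} is u.s.c., Lemma \ref{Tusc}(b) together with (d) gives that $T$ is nonempty valued, and $T$ is convex and compact valued by construction; then Lemma \ref{lemmaupp}, applied with the continuous function $f(x,y)=x-y$, shows that $\Phi$ is u.s.c. on $D$ with nonempty convex compact values. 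For $\mathcal K(x,y)=X\times K(x)$: by (b) each $K_i(x)$ is closed, convex, and contained in the compact set $\overline{K_i(X)}$, hence compact; a closed map with relatively compact range is u.s.c., and $K_i$ is also l.s.c. by (b), so $\mathcal K$ is a continuous self-map of $D$ with nonempty compact convex values, and $\Phi(D)$ is contained in a fixed compact convex set $D'$ (the product of $X-Z$ with a product of closed unit balls).

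To solve the QVI I would introduce a dual variable in order to convexify, and define $\mathcal H:D\times D'\rightrightarrows D\times D'$ by
$$\mathcal H(p,q)=\Big(\operatorname*{arg\,min}_{z\in\mathcal K(p)}\langle q,z\rangle\Big)\times\Phi(p).$$
For fixed $q$ the inner $\operatorname*{arg\,min}$ is the intersection of the compact convex set $\mathcal K(p)$ with a supporting hyperplane, so it is nonempty compact convex, and by the maximum theorem its dependence on $(p,q)$ is u.s.c. thanks to the continuity of $\mathcal K$; combined with the already established properties of $\Phi$, the map $\mathcal H$ is u.s.c. with nonempty convex closed values on the compact convex set $D\times D'$, so Theorem \ref{kakutani} furnishes a fixed point $(\bar p,\bar q)$, say $\bar p=(\bar x,\bar y)$. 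Then $\bar q\in\Phi(\bar p)$ and $\bar p\in\mathcal K(\bar p)$ with $\langle\bar q,z-\bar p\rangle\ge 0$ for all $z\in\mathcal K(\bar p)$, i.e.\ $\bar p$ solves $QVI(\Phi,\mathcal K)$; by Lemma \ref{GNEPRem} the component $\bar x$ is a projected solution of $\Gamma$.

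The step I expect to be the real obstacle is the fixed-point construction in the last paragraph. One cannot simply feed a ``best-response'' map assembled from $\Phi$, $\mathcal K$ and the projection of Lemma \ref{projection} into Theorem \ref{kakutani}, because the solution set of a set-valued variational inequality is not convex in general; the device of carrying the dual variable $q$ is what restores convex values. This in turn is exactly why hypothesis (b) asks $K_i$ to be \emph{both} closed and l.s.c.: full continuity of $\mathcal K$ is needed for the parametric $\operatorname*{arg\,min}$ map to be u.s.c., hence for $\mathcal H$ to meet the hypotheses of Theorem \ref{kakutani}. The remaining verifications (compactness of $co(\overline{K_i(X)})$, boundedness of $\Phi(D)$, the bookkeeping that $\mathcal H$ is a self-map) are routine.
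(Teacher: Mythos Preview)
Your argument is correct, and the overall architecture---reduce to $QVI(\Phi,\mathcal K)$ via Lemma~\ref{GNEPRem}, verify the regularity of $\Phi$ through Lemmas~\ref{Tusc} and~\ref{lemmaupp}, then solve the QVI---is exactly the paper's strategy. The only genuine divergence is in how the QVI is solved. The paper does not build a fixed-point map itself: after checking that $\mathcal K$ is a closed l.s.c.\ map with non-empty convex values (citing \cite[Lemma~2]{ausselradner}), it invokes the off-the-shelf existence result of Tan \cite[Theorem~3, Corollary]{tan} and is done. You instead reconstruct such a result in place, coupling the primal variable $p$ with a dual variable $q\in\Phi(p)$ and minimising the linear form $\langle q,\cdot\rangle$ over $\mathcal K(p)$; convexity of the argmin then lets Theorem~\ref{kakutani} apply directly.

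What each route buys: the paper's appeal to \cite{tan} is shorter and needs only that $\mathcal K$ be closed and l.s.c., whereas your use of Berge's theorem forces you to upgrade $\mathcal K$ to full continuity, which you obtain from the relative compactness of $K(X)$ (a closed map with relatively compact range is u.s.c.). In exchange, your proof is self-contained and makes transparent why the convex-valuedness obstruction you flag is real and how the dual variable circumvents it; the paper hides that mechanism inside Tan's theorem. Both approaches rest on the same compactness of $D=X\times\prod_{i}co(\overline{K_i(X)})$ and the same invocation of Lemma~\ref{GNEPRem}, so the difference is purely in the QVI step.
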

\begin{proof}
	One can observe that the set $D=X\times \prod_{i\in \Lambda}co (\overline{K_i(X)})$ is non-empty convex compact. Suppose the maps $\Phi:D\rightrightarrows \mathbb{R}^n\times \mathbb{R}^n$ and $\mathcal{K}:D\rightrightarrows D$ are defined as (\ref{definePhi}) and (\ref{defineK}), respectively.
	
	According to Lemma \ref{GNEPRem}, it is sufficient to show that $QVI(\Phi,\mathcal{K})$ (\ref{newQVI}) admits a solution $(\tilde x,\tilde y)\in X\times K(\tilde x)$.
	We know that the map $T$ is u.s.c. with non-empty convex and compact values over the set $\prod_{i\in \Lambda}co (\overline{K_i(X)})$ as per Lemma \ref{Tusc}. Hence, $\Phi$ becomes u.s.c. map with non-empty convex compact values by virtue of Lemma \ref{lemmaupp}. In the view of hypothesis (b), $\mathcal K$ is l.s.c. closed map with non-empty convex values as per \cite[Lemma 2]{ausselradner}. By employing \cite[Theorem 3 (Corollary)]{tan}, we obtain a solution $(\tilde x,\tilde y)\in X\times K(\tilde x)$ for $QVI(\Phi,\mathcal K)$. Hence, we get $(\tilde x^*,\tilde y^*)\in \Phi(\tilde x,\tilde y)$ such that $\tilde x^*=f(\tilde x,\tilde y)$ and $\tilde y^*\in T(\tilde y)$ satisfies,
	\begin{equation*}
	\langle(f(\tilde x,\tilde y),\tilde y^*), (x,y)-(\tilde x,\tilde y)\rangle\geq 0~\forall\,(x,y)\in X\times K(\tilde x).
	\end{equation*} 
	This affirms the occurrence of projected solution $\tilde x\in X$ for given GNEP $\Gamma$ as per Lemma \ref{GNEPRem}.
\end{proof}
It is noticeable that compactness of the choice sets $X_i$,  is required to prove the above result Theorem \ref{compactresult}. Based on a fixed point result, we ensure the occurrence of the projected solution for GNEP $\Gamma=(X_i,K_i,P_i)_{i\in \Lambda}$ without any boundedness assumption of $X_i$. 
\begin{theorem}
	Assume that for any $i\in \Lambda$:
	\begin{itemize}
		\item[(a)] $X_i$ is a non-empty closed convex subset of $\mathbb{R}^{n_i}$;
		\item[(b)] $K_i:X\rightrightarrows \mathbb{R}^{n_i}$ is closed lower semi-continuous map with non-empty convex values for any $x\in X$;
		\item[(c)] $P_i:\mathbb{R}^{n}\rightrightarrows \mathbb{R}^{n_i}$ admits open graph;
		\item[(d)] $x_i\notin co(P_i(x))$ for any $x\in \prod_{i\in \Lambda} co(\overline{K_i(X)})$.
	\end{itemize}
	Then, $\Gamma=(X_i,K_i,P_i)_{i\in \Lambda}$ admits a projected solution if $K_i(X)$ is relatively compact for any $i$.
\end{theorem}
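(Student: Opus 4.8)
The plan is to rewrite ``$\Gamma$ admits a projected solution'' as a fixed point problem on a (possibly unbounded) convex set and then apply the Himmelberg theorem, Theorem \ref{kakutani}, whose statement asks only that the \emph{range} of the correspondence lie in a compact subset of its domain --- which is exactly what lets us dispense with boundedness of the $X_i$. Since each $K_i(X)$ is relatively compact, the set $C_i:=co(\overline{K_i(X)})$ is compact and convex, and hence so are $C:=\prod_{i\in\Lambda}C_i$ and $\bar B:=\prod_{i\in\Lambda}\bar B_i(0,1)$; moreover $Pr_X(C)$ is compact by continuity of $Pr_X$ (Lemma \ref{projection}) and contained in $X$, while $K(x):=\prod_{i\in\Lambda}K_i(x)\subseteq C$ for every $x\in X$.

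First I would record the regularity of the ingredient maps. As $P_i$ has open graph it is l.s.c.\ and has open values, so by Lemma \ref{Tusc} the map $T$ of (\ref{T}) is u.s.c.\ on the closed set $C$ and non-empty valued there --- hypothesis (d) being precisely $x_i\notin co(P_i(x))$ on $\prod_{i\in\Lambda}co(\overline{K_i(X)})=C$ --- and it is convex compact valued by construction. Next, each $K_i$ is closed with range in the compact set $\overline{K_i(X)}$, hence u.s.c.\ by \cite[Theorem 17.11]{aliprantis}, and it is l.s.c.\ by assumption; thus $K=\prod_{i\in\Lambda}K_i$ is continuous with non-empty compact convex values. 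Finally $Pr_X$ is single-valued and continuous.

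I would then set $Y:=X\times C\times\bar B$ (non-empty convex) and $D:=Pr_X(C)\times C\times\bar B$ (compact, contained in $Y$), and define $\Xi:Y\rightrightarrows D$ by
\[
\Xi(x,y,y^*)=\{Pr_X(y)\}\times G(x,y^*)\times T(y),\qquad
G(x,y^*):=\Big\{z\in K(x):\ \langle y^*,z\rangle=\min_{w\in K(x)}\langle y^*,w\rangle\Big\}.
\]
The crucial point is that $\Xi$ is u.s.c.\ with non-empty convex compact values and maps $Y$ into $D$. Each $G(x,y^*)$ is non-empty (a linear functional attains its minimum on the compact set $K(x)$), convex and compact, and $(x,y^*)\mapsto G(x,y^*)$ is u.s.c.\ by Berge's maximum theorem, since $(z,y^*)\mapsto\langle y^*,z\rangle$ is continuous and $K$ is continuous with compact values; the first and third factors of $\Xi$ are u.s.c.\ by the previous paragraph, and a finite product of compact-valued u.s.c.\ maps is u.s.c. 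The inclusion $\Xi(Y)\subseteq D$ holds because $Pr_X(y)\in Pr_X(C)$, $G(x,y^*)\subseteq K(x)\subseteq C$ and $T(y)\subseteq\bar B$.

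Applying Theorem \ref{kakutani} would give $(\bar x,\bar y,\bar y^*)\in\Xi(\bar x,\bar y,\bar y^*)$, i.e.\ $\bar x=Pr_X(\bar y)$, $\bar y\in K(\bar x)$ with $\langle\bar y^*,w-\bar y\rangle\geq 0$ for all $w\in K(\bar x)$, and $\bar y^*\in T(\bar y)$; thus $\bar y$ solves $VI(T,K(\bar x))$, $\bar y\in K(\bar x)$ and $Pr_X(\bar y)=\bar x$, so $\bar x$ is a projected solution of $QVI(T,K)$ with $K=\prod_{i\in\Lambda}K_i$. Since each $X_i$ is non-empty closed convex, each $K_i$ is non-empty valued, and open graph of $P_i$ yields open upper sections, Theorem \ref{equivalence} then applies and shows $\bar x$ is a projected solution of $\Gamma$. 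I expect the main obstacle to be the joint upper semicontinuity of $G$ --- which forces one first to upgrade ``$K_i$ closed and l.s.c.'' to continuity via relative compactness of $K_i(X)$ before invoking Berge's theorem --- together with the observation that the dual variable $y^*$ must be carried as a separate block, since otherwise $\bigcup_{y^*\in T(y)}G(x,y^*)$ would not be convex and $\Xi$ would fail the hypotheses of Theorem \ref{kakutani}.
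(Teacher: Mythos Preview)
Your argument is correct, but it takes a different route from the paper. The paper does \emph{not} pass through the variational reformulation for this theorem; instead it runs a Shafer--Sonnenschein style fixed-point argument directly on preferences. Concretely, the paper defines $g_i(y,z_i)=d((y,z_i),G_i^c)$ where $G_i$ is the (open) graph of $P_i$, takes $M_i(x,y)=\arg\max_{z_i\in K_i(x)}g_i(y,z_i)$ and $M'(x,y)=\prod_i co(M_i(x,y))$, and applies Theorem~\ref{kakutani} to $\Phi(x,y)=Pr_X(y)\times M'(x,y)$ on $X\times\prod_i co(\overline{K_i(X)})$; a fixed point $(\tilde x,\tilde y)$ is then shown to be a projected solution by observing that if $P_i(\tilde y)\cap K_i(\tilde x)\neq\emptyset$ then $M_i(\tilde x,\tilde y)\subset P_i(\tilde y)$, which forces $\tilde y_i\in co(P_i(\tilde y))$ and contradicts (d). By contrast, you recycle Lemma~\ref{Tusc} and Theorem~\ref{equivalence}, and the novelty of your construction is to carry the dual variable $y^*$ as an independent block so that the argmin map $G(x,y^*)$ stays convex-valued and Berge's theorem applies once $K$ has been upgraded to a continuous compact-valued correspondence. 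Your approach makes fuller use of the machinery already built in Section~3 and yields a shorter proof given those results; the paper's approach is more self-contained, avoids the normal-cone operator $T$ altogether, and makes transparent exactly where the open-graph hypothesis is used (it is what makes $G_i^c$ closed and hence $g_i$ continuous).
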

\begin{proof}
	Define a function $g_i:\mathbb{R}^n\times \mathbb{R}^{n_i}\rightarrow \mathbb{R}_+$ as,
	\begin{equation*}
	g_i(y,x_i)=d((y,x_i),G_i^c),
	\end{equation*}
	where $G_i=\{(x,y_i)\in \mathbb{R}^n\times \mathbb{R}^{n_i} \,|\,y_i\in P_i(x)\}$, $G_i^c=\text{complement of}~G_i=\{(x,y_i)\in \mathbb{R}^n\times \mathbb{R}^{n_i}\,|\,y_i\notin P_i(x)\}$ and $d$ is Euclidean metric. Clearly, $g_i(y,x_i)>0$ if and only if $(y,x_i)\notin G_i^c$ or $x_i\in P_i(y)$. 
	
	Define, $M_i:X\times \mathbb{R}^n\rightarrow \mathbb{R}^{n_i}$ as,
	\begin{equation*}
	M_i(\bar x,\bar y)=\{\bar z_i\in K_i(\bar x)|\,g_i(\bar y,\bar z_i)\geq g_i(\bar y,z_i),\,\forall\,z_i\in K_i(\bar x)\}.
	\end{equation*}
	We claim that $M_i$ is upper semi-continuous map with non-empty and compact values. Since the map $K_i$ is closed, we observe $K_i(x)\subset \overline {K_i(X)}$ is compact for any $x\in X$. Suppose $(x,y)\in X\times \mathbb{R}^{n}$ is arbitrary then $M_i(x,y)\neq \emptyset$ because $g_i(y,.)$ being continuous function attains maxima on the compact set $K_i(x)$. We show that $M_i$ is a closed map. Suppose the sequences $(\bar x_n,\bar y_n)$ and $\bar z_i^n\in M_i(\bar x_n,\bar y_n)$ converges to $(\bar x,\bar y)$ and $\bar z_i$, respectively. We aim to show that $\bar z_i\in M_i(\bar x,\bar y)$. In fact, for any $n\in \mathbb{N}$ we have $\bar z_i^n\in M_i(\bar x_n,\bar y_n)$, which implies
	\begin{equation}\label{res1eq1}
	g_i(\bar y_n,\bar z_i^n) \geq g_i(\bar y_n,z_i)~\text{for all}~z_i\in K_i(\bar x_n).
	\end{equation}
	Furthermore, $\bar z_i\in K_i(\bar x)$ as we know $\bar z_i^n\in K_i(\bar x_n)$ for all $n\in \mathbb{N}$ and $K_i$ is a closed map. Suppose $w_i\in K_i(\bar x)$ is arbitrary. Then, by using the lower semi-continuity of $K_i$ we obtain $w_i^n\in K_i(\bar x_n)$ such that $w_i^n$ converges to $w_i$. Hence, by (\ref{res1eq1}) we have,
	\begin{equation}\label{res1eq2}
	g_i(\bar y_n,\bar z_i^n) \geq g_i(\bar y_n,w_i^n)~\text{for all}~n\in \mathbb{N}.
	\end{equation}
	As we know that $g_i$ is continuous (see Remark \ref{gi}), by taking $n\rightarrow\infty$ in (\ref{res1eq2}) we obtain,
	\begin{equation}\label{res1eq3}
	g_i(\bar y,\bar z_i) \geq g_i(\bar y,w_i).
	\end{equation}
	Since, $w_i\in K_i(\bar x)$ is arbitrary, we observe that (\ref{res1eq3}) holds for all $w_i\in K_i(\bar x)$. Hence, $\bar z_i\in M_i(\bar x,\bar y)$ and $M_i$ is a closed map. This proves $M_i$ is upper semi-continuous with non-empty compact values as $M_i(X\times \mathbb{R}^n)\subset K_i(X)$ is relatively compact \cite[Theorem 17.11]{aliprantis}. 
	
	Let us define the map $M':X\times \mathbb{R}^n\rightrightarrows \mathbb{R}^{n}$ as $M'(x,y)=\prod_{i\in \Lambda} M'_i(x,y)$ where $M'_i:X\times \mathbb{R}^n\rightrightarrows \mathbb{R}^{n_i}$ as $M'_i(x,y)=co(M_i(x,y))$. Then, $M'$ is an upper semi-continuous map with non-empty convex compact values as per \cite[Proposition 2.1]{aussel-cotrina}. Further $M'(X\times \mathbb{R}^n)$ is contained in $Q=\prod_{i\in \Lambda} co(\overline{K_i(X)})$, which is compact.
	
	We define the map $Pr:\mathbb{R}^n\rightrightarrows X$ as,
	\begin{equation*}
	Pr(y)=\{\bar x\in X|\, \norm{y-\bar x}=\inf_{x\in X}\norm{y-x}\}.
	\end{equation*}
Now, $Pr$ is a continuous single-valued map as per Lemma \ref{projection}. Further, we observe that the set $Pr(Q)$ is compact as $Q$ is a compact set.

 Eventually, the map $\Phi: X\times Q\rightrightarrows \mathbb{R}^n\times \mathbb{R}^n$ defined as $\Phi(x,y)=Pr(y)\times M'(x,y)$ becomes upper semi-continuous with non-empty convex compact values. Further, $\Phi(X\times Q)=Pr(Q)\times M'(X\times Q)$ is relatively compact. Hence, we obtain a fixed point $(\tilde x,\tilde y)\in \Phi(\tilde x,\tilde y)$ by virtue of Theorem \ref{kakutani}. 
	
	We claim that $\tilde x$ is a projected solution for the given GNEP. In fact, $\tilde x\in Pr(\tilde y)$ and it remains to show that $\tilde y\in M'(\tilde x)$ satisfies $P_i(\tilde y)\cap K_i(\tilde x)=\emptyset$ for all $i\in \Lambda$. On contrary, suppose $z_i\in P_i(\tilde y)\cap K_i(\tilde x)$ for some $i\in \Lambda$. Then, $(\tilde y,z_i)\notin G_i^c$ and $g_i(y,x)>0$. Thus, for any $w_i\in M_i(\tilde x,\tilde y)$ we have,
	\begin{equation*}
	g_i(\tilde y, w_i)\geq g_i(\tilde y, z_i)>0.
	\end{equation*}
	Hence, $(\tilde y, w_i)\notin G_i^c$ or $w_i\in P_i(\tilde y)$. This proves, $M_i(\tilde x, \tilde y)\subset P_i(\tilde y)$. Consequently, we have $\tilde y_i\in co(M_i(\tilde x, \tilde y))\subset co(P_i(\tilde y))$. This, contradicts our assumption (c). Hence, $P_i(\tilde y)\cap K_i(\tilde x)=\emptyset$ for all $i\in \Lambda$ holds true. 
	%On contrary, suppose that $\bar z_i\notin M_i(\bar x,\bar y)$ then, 
	%\begin{equation*}
	%	u_i(\bar y,\bar w_i)>u_i(\bar y,\bar z_i)~\text{for some}~\bar w_i\in K_i(\bar x).
	%\end{equation*} 
	%Since $u_i$ is continuous (see Remark-1), we have an open set $\mathcal{O} (\bar y, \bar z_i)$ containing $(\bar y, \bar z_i)$ such that,
	%\begin{equation*}
	%	u_i(\bar y,\bar w_i)>u_i(y, z_i)~\text{for all}~(y, z_i)\in \mathcal{O} (\bar y, \bar z_i).
	%\end{equation*}
	%Since, $\mathcal{O} (\bar y, \bar z_i)$ is an open set containing $(\bar y, \bar z_i)$, there exists $n_\circ$ such that $(\bar y_n,\bar z_i^n)\in \mathcal{O} (\bar y, \bar z_i)$ for all $n\geq n_\circ$. Then, 
	%\begin{equation}\label{res1eq1}
	%	u_i(\bar y,\bar w_i)>u_i(\bar y_n,\bar z_i^n)~\text{for all}~n\geq n_\circ.
	%\end{equation}
	%Now, one can obtain a sequence $\bar w_i^n\in k_i(\bar x_n)$ such that $\bar w_i^n\rightarrow \bar w_i$ due to lower semi-continuity of the map $K_i$. Further, by using the fact that $\bar z_i^n\in M_i(\bar x_n,\bar y_n)$ we have,
	%\begin{equation}\label{res1eq2}
	%	u_i(\bar y_n,\bar z_i^n) \geq u_i(\bar y_n,\bar w_i^n)~\text{for all}~n\in \mathcal{N}.
	%\end{equation}
	%On combining inequalities (\ref{res1eq1}) and (\ref{res1eq2}) we obtain,
\end{proof}

\begin{remark}\label{gi}
	One can observe that the function $g_i$ meets continuity. In the view of hypothesis (c), the set $G_i^c$ is closed in $\mathbb{R}^n\times \mathbb{R}^{n_i}$. Hence, for each $(y,x_i)\in \mathbb{R}^n\times \mathbb{R}^{n_i}$ we have atleast one $(y',x'_i)\in G_i^c$ such that,
	\begin{equation}
	d((y,x_i),G_i^c)=d((y,x_i),(y',x'_i)).
	\end{equation}
	By using this fact one can obtain,
	\begin{equation}
	\big|d((z,w_i),G_i^c)-d((y,x_i),G_i^c)\big|\leq d((z,w_i),(y,x_i))
	\end{equation} 
	for any $(z,w_i)$ and $(y,x_i)$ in $\mathbb{R}^n\times \mathbb{R}^{n_i}$. Thus, for any $\epsilon>0$ we can take $\delta=\epsilon$ to show that $\big|g_i(z,w_i)-g_i(y,x_i)\big|<\epsilon$ whenever $d((z,w_i),(y,x_i))<\delta$.
\end{remark}

%\bibliographystyle{plain}
%\bibliography{bib}

%\bibliographystyle{elsarticle-num}
%\bibliography{bib}

\begin{thebibliography}{99}
	\bibitem{homidan} S. Al-Homidan, N. Hadjisavvas, L. Shaalan, Transformation of quasiconvex functions to eliminate local minima, J. Optim. Theory Appl. 177 (2018) 93--105.
	\bibitem{aliprantis} C.D. Aliprantis, K.C. Border, Infinite Dimensional Analysis: A Hitchhiker's Guide, Springer, Berlin, 2006.
	\bibitem{debreu} K.J. Arrow, G. Debreu, Existence of an equilibrium for a competitive economy, Econometrica 22 (1954) 265--290.
	\bibitem{aubin} J.P. Aubin, Optima and equilibria: an introduction to nonlinear analysis (2nd Edition), Springer, Berlin, 1998.
	\bibitem{ausselproj} D. Aussel, A. Sultana, V. Vetrivel, On the existence of projected solutions of quasi-variational inequalities and generalized Nash equilibrium problems, J. Optim. Theory Appl. 170 (2016) 818--837.
	\bibitem{aussel-cotrina} D. Aussel, J. Cotrina, Stability of quasimonotone variational inequality under sign-continuity, J. Optim. Theory Appl. 158 (2013) 653--657.

	
	
	%\bibitem{aumann} R.J. Aumann, Utility theory without the completeness axiom, Econometrica 30 (1962) 445–462.
	\bibitem{ausselnormal} D. Aussel, New developments in quasiconvex optimization, in: S.A.R. Al-Mezel, F.R.M. Al-Solamy, Q.H. Ansari, (Eds.), Fixed Point Theory, Variational Analysis and Optimization, Chapman and Hall, CRC Press, 2014. %p. 173--205.
	%\bibitem{ausselGNEP} D. Aussel, J. Dutta, Generalized Nash equilibrium problem, variational inequality and quasiconvexity, Oper. Res. Lett. 36 (2008) 461--464.
	%\bibitem{aussel-rachna} D. Aussel, R. Gupta, A. Mehra, Evolutionary variational inequality formulation of the generalized Nash equilibrium problem, J. Optim. Theory Appl. 169 (2016) 74--90.
	%\bibitem{aussel-hadj} D. Aussel, N. Hadjisavvas, On quasimonotone variational inequalities, J. Optim. Theory Appl. 121 (2004) 445--450.
	%\bibitem{ausselcoer} D. Aussel, A. Sultana, Quasi-variational inequality problems with non-compact valued constraint maps, J. Math. Anal. Appl. 456 (2017) 1482--1494. 
	\bibitem{ausselradner} D. Aussel, M.B. Donato, M. Milasi, A. Sultana, Existence Results for Quasi-variational Inequalities with Applications to Radner Equilibrium Problems, Set-Valued Var Anal. 29 (2021) 931--948.
	%\bibitem{bade} S. Bade, Nash equilibrium in games with incomplete preferences, Econ. Theory. 26 (2005) 309-–332.
	%\bibitem{milasi2013} I. Benedetti, M.B. Donato, M. Milasi, Existence for competitive equilibrium by means of generalized quasivariational inequalities, Abstr. Appl. Anal. 2013 (2013) 648986.
	%\bibitem{bergstrom}  T. Bergstrom, The existence of maximal elements and equilibria in the absence of transitivity, Working paper,
	%	Department of Economics, UCSB, UC Santa Barbara; 1975.
	%\bibitem{boyd} S. Boyd, L. Vandenberghe, Convex Optimization, Cambridge University Press, Cambridge, 2004.
	\bibitem{cotrinaGNEP} O. Bueno, J. Cotrina, Existence of projected solutions for generalized Nash equilibrium problems, J. Optim. Theory Appl. 191 (2021) 344--362.
	\bibitem{cotrina} O. Bueno, C. Calderon, J. Cotrina, A note on coupled constraint Nash games, Arxiv preprint, https://doi.org/10.48550/arXiv.2201.04262.
	%\bibitem{chan} D. Chan, J.S. Pang. The generalized quasi-variational inequality problem, Math. Oper. Res. 7 (1982) 211--222.
	%\bibitem{cotrinaeq} J. Cotrina, A. Hantoute, A. Svensson, Existence of quasi-equilibria on unbounded constraint sets, Optimization 71 (2022) 337--354.
	\bibitem{cotrinatime} J. Cotrina, J. Z{\'u}{\~n}iga, Time-dependent generalized Nash equilibrium problem, J. Optim. Theory Appl. 179 (2018) 1054--1064.
	
	%\bibitem{debreu1} G. Debreu, Representation of a preference ordering by a numerical function, in: M. Thrall, R.C. Davis, C.H. Coombs (Eds.), Decision Processes, Wiley, New York, 1954.
	%\bibitem{debreubook} Debreu G. Theory of value: An axiomatic analysis of economic equilibrium.  New Haven and London: Yale University Press; 1959.
	
	%\bibitem{facc} F. Facchinei, A. Fischer, V. Piccialli, On generalized Nash games and variational inequalities, Oper Res Lett. 35 (2007) 159--164. 
	\bibitem{faccsurvey} F. Facchinei, C. Kanzow, Generalized Nash equilibrium problems, Ann. Oper. Res. 175 (2010) 177--211.
	
	\bibitem{yannelis} W. He, N.C. Yannelis, Existence of Walrasian equilibria with discontinuous, non-ordered, interdependent and price-dependent preferences, Econ. Theory 61 (2016) 497--513.
	
	\bibitem{himel} C.J. Himmelberg, Fixed points of compact multifunctions, J. Math. Anal. Appl. 38 (1972) 205--207.
	\bibitem{stampbook} D. Kinderlehrer, G. Stampacchia, An introduction to variational inequalities and their applications, SIAM, 2000.
	\bibitem{kreps} D.M. Kreps, A Course in Microeconomics Theory, Princeton University Press, Princeton, 1990.
	%\bibitem{krepsweak} D.M. Kreps, Microeconomic Foundations I: Choice and Competitive Markets, Princeton University Press,  Princeton, 2013.
	%\bibitem{collel} Mas-Collel~A, Whinston~MD, Green~JR. Microeconomic Theory. New York(NY): Oxford University Press; 1995. 
	%\bibitem{collel} A. Mas-Collel, An equilibrium existence theorem without complete or transitive preferences, J. Math. Econ. 1 (1974) 237--246.
	\bibitem{milasipref} M. Milasi, A. Puglis, C. Vitanza, On the study of the economic equilibrium problem through preference relations, J. Math. Anal. Appl. 477 (2019) 153--162. 
	\bibitem{milasipref2021} M. Milasi, D. Scopelliti, A variational approach to the maximization of preferences without numerical representation, J. Optim. Theory Appl. 190 (2021) 879--893.
%	\bibitem{yannelis2022} K. Podczeck, N.C. Yannelis, Existence of Walrasian equilibria with discontinuous, non-ordered, interdependent and price-dependent preferences, without free disposal, and without compact consumption sets, Econ. Theory 73 (2022) 413--420.
	\bibitem{rockafellar} R.T. Rockafellar, R.J.B. Wets, Variational Analysis, Springer Science \& Business Media, London, 2009.
	\bibitem{scalzo1} V. Scalzo, Equilibrium existence in games: slight single deviation property and Ky Fan minimax inequality, J. Math. Econ. 82 (2019) 197--201.
	\bibitem{scalzo2} V. Scalzo, Existence of doubly strong equilibria in generalized games and quasi-Ky Fan minimax inequalities, J. Math. Anal. Appl. 514 (2022) 126258. 
		\bibitem{shafer} W. Shafer, H. Sonnenschein, Equilibrium in abstract economies without ordered preferences, J. Math. Econ. 2 (1975) 345--348.  
	
	%\bibitem{rosen} J.B. Rosen, Existence and Uniqueness of Equilibrium Points for Concave N-Person Games, Econometrica 33 (1965) 520--534.
	
	
%	\bibitem{shafer} W. Shafer, H. Sonnenschein, Equilibrium in abstract economies without ordered preferences, J. Math. Econ. 2 (1975) 345--348. 
%\bibitem{scalzo_disc} V. Scalzo, Existence of strong equilibrium in discontinuous games, J. Math. Anal. Appl. 491 (2020) 124247.	
	%\bibitem{stampacchia} G. Stampacchia, Variational inequalities, in: A. Ghizzetti (Ed.), Theory and applications of monotone operators, Edizioni Oderisi, 1968.
	
	\bibitem{tan} N.X. Tan, Quasi-variational inequality in topological linear locally convex Hausdorff space, Math. Nachr. 122 (1985) 231--245.
	\bibitem{tian} G. Tian, On the existence of equilibria in generalized games, Int. J. Game Theory 20 (1992) 247--254.
	%\bibitem{neumann} J. Von Neumann, O. Morgenstern, Theory of Games and Economic Behavior, Princeton University Press, Princeton, 1953.
	%\bibitem{ausselQVI} Aussel~D, Cotrina~J. Quasimonotone quasi-variational inequalities: Existence results and applications. J Optim Theory Appl. 
\end{thebibliography}

%% Authors are advised to submit their bibtex database files. They are
%% requested to list a bibtex style file in the manuscript if they do
%% not want to use elsarticle-num.bst.

%% References without bibTeX database:

% \begin{thebibliography}{00}
	
	%% \bibitem must have the following form:
	%%   \bibitem{key}...
	%%
	
	% \bibitem{}
	
	% \end{thebibliography}
\end{document}